\documentclass{article}
\usepackage[letterpaper,margin=1.0in]{geometry}
\usepackage{amsthm}

\usepackage{amsmath,amsfonts,bm,amssymb}

\def\eqref#1{equation~\ref{#1}}

\def\1{\bm{1}}

\def\eps{{\epsilon}}

\def\vzero{{\bm{0}}}

\def\ve{{\bm{e}}}

\def\vr{{\bm{r}}}
\def\vs{{\bm{s}}}

\def\vu{{\bm{u}}}
\def\vv{{\bm{v}}}
\def\vw{{\bm{w}}}
\def\vx{{\bm{x}}}
\def\vy{{\bm{y}}}
\def\vz{{\bm{z}}}

\def\mB{{\bm{B}}}

\def\mD{{\bm{D}}}

\def\mH{{\bm{H}}}

\DeclareMathAlphabet{\mathsfit}{\encodingdefault}{\sfdefault}{m}{sl}
\SetMathAlphabet{\mathsfit}{bold}{\encodingdefault}{\sfdefault}{bx}{n}

\def\gB{{\mathcal{B}}}

\def\gF{{\mathcal{F}}}

\def\gO{{\mathcal{O}}}

\def\gS{{\mathcal{S}}}

\def\sO{{\mathbb{O}}}

\def\sR{{\mathbb{R}}}

\newcommand{\E}{\mathbb{E}}

\DeclareMathOperator*{\argmin}{arg\,min}

\newtheorem{thm}{Theorem}[section]
\newtheorem{dfn}{Definition}[section]

\newtheorem{lem}{Lemma}[section]
\newtheorem{asm}{Assumption}[section]

\newtheorem{cor}{Corollary}[section]

\newtheorem{prop}{Proposition}[section]
\newtheorem{cond}{Condition}[section]

\newcommand{\norm}[1]{\left\| #1 \right\|}

\usepackage{mdframed}
\usepackage{hyperref}
\usepackage{enumitem}
\usepackage[utf8]{inputenc} 
\usepackage[T1]{fontenc}    
\usepackage{hyperref}       
\usepackage{url}            
\usepackage{booktabs}       
\usepackage{amsfonts}       
\usepackage{nicefrac}       
\usepackage{microtype}      
\usepackage{graphicx}       
\usepackage[numbers,sort]{natbib}
\usepackage{multirow}
\usepackage{bbding}
\usepackage{caption}
\graphicspath{{media/}}     
\usepackage{color}
\usepackage{algpseudocode,algorithm}
\usepackage{nicefrac}
\usepackage{cancel}
\usepackage{graphicx} 
\usepackage{booktabs}
\usepackage{makecell}
\usepackage{multirow}
\usepackage{amssymb,amsthm}
\usepackage{tcolorbox}
\usepackage{threeparttable}
\usepackage[table]{xcolor}
\usepackage{pifont}
\usepackage{multirow, hhline}
\definecolor{mydarkgreen}{RGB}{39,130,67}

\definecolor{mydarkred}{RGB}{192,47,25}

\newcommand{\Prob}{\mathbb P}

\usepackage{thmtools}
\usepackage{thm-restate}

\title{Near-Optimal Second-Order
Convex Finite-Sum Optimization under Individual Smoothness}

\author{Xinliang Zhang \textsuperscript{* 1} \qquad Lesi Chen \textsuperscript{* 1}\\
Zonghan Li \textsuperscript{1}
\qquad Jingzhao Zhang \textsuperscript{1} \\
\vspace{-2mm} \\
\normalsize{\textsuperscript{1}IIIS, Tsinghua University}\\
\normalsize{ \texttt{\{xinliang23,chenlc23,lizongha24\}@mails.tsinghua.edu.cn,  }} \\
\normalsize{\texttt{
jingzhaoz@mail.tsinghua.edu.cn
}}}

\begin{document}
\maketitle
\begingroup
\begin{NoHyper}
\renewcommand\thefootnote{*}
\footnotetext{Equal contributions.}
\end{NoHyper}
\endgroup
\begin{abstract}
We study the stochastic second-order oracle complexity of solving smooth convex finite-sum optimization with $n$ components. Very recently, Tsedenov \& Kovalev (2026) showed that the minimax optimal complexity under averaged smoothness and convexity is $\tilde \Theta(n + n^{6/7} \eps^{-2/7})$ by establishing nearly matching upper and lower bounds. In this paper, we show that individual smoothness and convexity provably enable further acceleration, where the lower complexity bound is $\Omega(n+ n^{5/7} \eps^{-2/7})$ under this stronger assumption. Then, by
establishing a novel second-order self-bounding inequality, we design a new Taylor-corrected lazy Newton method, which can lead to a better complexity of $\tilde{\mathcal{O}}(n + n^{5/7} \eps^{-2/7})$ by applying the optimal Monteiro-Svaiter acceleration, matching our lower bound up to only logarithmic factors.
\end{abstract}

\section{Introduction}

This paper studies the complexity of solving the finite-sum optimization problem:
\begin{align} \label{prob:main}
    \min_{\vx \in \sR^d} f(\vx) \triangleq \frac{1}{n} \sum_{i=1}^n f_i(\vx),
\end{align}
where each individual function $f_i(\vx)$ is smooth and convex. The convex finite-sum optimization problem covers many important applications in machine learning, including empirical risk minimization such as linear regression and logistic regression \citep{theodoridis2006pattern}.
We measure the complexity of an algorithm by the number of stochastic oracle calls, \textit{i.e.}, the derivative information of each component function $f_i$, to find an $\epsilon$-solution $\hat \vx \in \sR^d$ that satisfies $f(\hat \vx) - \min_{\vx \in \sR^d} f(\vx) \le \eps$. 

For full-batch first-order methods, the standard deterministic gradient descent achieves the stochastic first-order oracle (SFO) complexity of $\gO(n \epsilon^{-1})$, and Nesterov's accelerated gradient descent achieves $\gO(n \eps^{-1/2})$, which is optimal in $\eps$ for first-order methods \citep{nesterov2018lectures}. For deterministic second-order methods, Nesterov's accelerated cubic Newton method (ACRN) \citep{nesterov2006cubic,nesterov2008accelerating} achieves the stochastic second-order oracle (SSO) complexity of $\gO(n \eps^{-1/3})$, which is subsequently improved to $\gO(n \eps^{-2/7} \log \eps^{-1})$ by Monteiro-Svaiter acceleration \citep{monteiro2013accelerated}, and to recently $\gO(n \eps^{-2/7})$ by bisection-free techniques \citep{carmon2022optimal,kovalev2022first}, achieving optimal dependencies in $\eps$ that matches the lower bound for second-order methods \citep{arjevani2019oracle}.

However, the $n$ overhead per iteration is expensive in large-scale optimization, which inspires the study of stochastic or finite-sum methods. Under additional $\sigma^2$-bounded variance assumptions on the noise, the stochastic gradient descent (SGD) achieves the SFO complexity of $\gO(\sigma^2 \eps^{-2})$, which is suboptimal in $\eps$. Optimal~$\eps$ dependencies can be achieved by the celebrated variance reduction technique~\citep{johnson2013accelerating,roux2012stochastic,defazio2014finito,shalev2014accelerated,allen2018katyusha,lan2019unified,song2020variance,carmon2022recapp}. Notably, \citet{allen2018katyusha} proposed the Katyusha method that achieves the SFO complexity of $\gO(n \log \eps^{-1} + n^{1/2} \eps^{-1/2})$ when each $f_i$ is convex, nearly  matching the $\Omega(n+ n^{1/2} \eps^{-1/2})$ lower bound \citep{woodworth2016tight}.

Although optimal stochastic first-order methods are well studied, little is known on how to design optimal second-order ones, until very recently \citet{tsedenov2026optimal} proposed a novel method that achieves the SSO complexity of $  \gO( n \log n + n^{6/7} \eps^{-2/7})$. The authors also showed that their method is optimal under averaged smoothness and convexity by establishing a nearly matching lower bound of $\Omega(n + n^{6/7} \eps^{-2/7})$ for linear-spanned algorithms. 

One gap still remains: the above lower bound does not rule out the possibility of leveraging the individual smoothness and convexity that are common in practice \citep{johnson2013accelerating,defazio2014saga,allen2018katyusha,nguyen2017sarah} for further acceleration. To further investigate the optimal complexity under individual smoothness and convexity, we first prove a lower bound of $\Omega(n + n^{5/7} \eps^{-2/7})$ under this setting, suggesting a gap compared to existing upper bounds. Then, we seek an optimal stochastic second-order method under individual smoothness and convexity. In the literature of first-order methods, the source of acceleration of individual smoothness and convexity over the averaged ones mainly comes from the following self-bounding inequality of each $f_i$:
\begin{align} \label{eq:cocorcive-first-order}
\| \nabla f_i(\vx) - \nabla f_i(\vy) \|^2 \le 2 \ell D_{f_i}(\vx,\vy), \quad \forall \vx,\vy \in \sR^d,  
\end{align}
where each $f_i$ is convex with $\ell$-Lipschitz continuous gradients and $D_{f_i}(\vx,\vy) \triangleq f_i(\vx) - f_i(\vy) - \langle \nabla f_i(\vy), \vx - \vy \rangle$ is the Bregman distance induced by $f_i$. See, \textit{e.g.}, \citet[Theorem 2.1.5]{nesterov2018lectures} in  the textbook. By leveraging this first-order self-bounding inequality to obtain a better variance bound for stochastic variance-reduced estimators \citep{johnson2013accelerating}, the Katyusha algorithm \citep{allen2018katyusha} improves the guarantee of KatyushaX \citep{allen2018katyushax} without individual smoothness and convexity from $\gO(n + n^{3/4} \eps^{-1/2})$ to $\gO(n \log \eps^{-1} + n^{1/2} \eps^{-1/2})$. 

To exploit the individual smoothness and convexity assumptions for second-order methods, 
our work establishes a second-order self-bounding inequality when each $f_i$ has $L$-Lipschitz continuous Hessians:
\begin{align} \label{eq:cocorcive-second-order}
    \| \nabla f_i(\vx) - \nabla f_i(\vy) - \nabla^2 f_i(\vy) (\vx- \vy) \|^2 \le 4 L \| \vx - \vy \| D_{f_i}(\vx,\vy), \quad \forall \vx, \vy \in \sR^d.
\end{align}
To the best of our knowledge, this new inequality is not recorded in any references and is of independent interest. It gives a tighter control of the Taylor remainder on the left-hand side, which allows the design of a stochastic Taylor remainder estimator with a smaller error than \citet{tsedenov2026optimal}.  

Next, we design a stochastic Newton method that reuses the full gradient and Hessian at snapshot points every $n$ iterations but still converges after the correction of the stochastic Taylor estimator. This makes the problem close to the setting of reusing Hessians, whose near-optimal oracle complexity is well-understood by recent results \citep{doikov2023second,chen2025secondorder,adil2025balancing,chayti2023unified,zhang2026near,wu2026gradient}. By combining existing techniques for designing near-optimal methods under the latter setting~\citep{zhang2026near}, including optimal Monteiro-Svaiter acceleration \citep{monteiro2013accelerated,carmon2022optimal,kovalev2022first} and lazy Newton updates \citep{doikov2023second,chen2026faster,zhang2026near}, we develop the Accelerated Taylor-corrected Lazy Newton (A-TLN) method that achieves the SSO complexity of 
$\tilde{\mathcal{O}}(n + n^{5/7}\epsilon^{-2/7})$, matching our newly established lower bound up to logarithmic factors and significantly improving upon existing methods without individual smoothness and convexity, as compared in Table~\ref{tab:res}.

\begin{table*}[htbp]
  \caption{Comparison of first-order and second-order stochastic methods for solving Problem~(\ref{prob:main}) under different smoothness assumptions. The averaged smooth lower bounds \citep{zhou2019lower,tsedenov2026optimal} also satisfy individual convexity.}
  \label{tab:res}
  \centering
        \begin{tabular}{c c c c}
        \hline
        Oracle & Smoothness \& Convexity  & Upper Bound & Lower Bound \\ 
        \hline \addlinespace
        \multirow{2}{*}{First-Order} & Averaged
        & \makecell[c]{$\gO(n+ n^{3/4} \eps^{-1/2} )$ \\
        (\citet{allen2018katyushax})} & \makecell[c]{$\Omega(n+ n^{3/4} \eps^{-1/2} )$ \\
        (\citet{zhou2019lower})}  \\
        \cline{2-4} \addlinespace
        & Individual &  \makecell[c]{ $\tilde \gO(n+n^{1/2} \eps^{-1/2})$ \\ (\citet{allen2018katyusha}) } & \makecell[c]{$\Omega(n+n^{1/2} \eps^{-1/2}) $ \\ (\citet{woodworth2016tight})}  \\
        \hline \addlinespace
        \multirow{2}{*}{Second-Order} 
        & Averaged & \makecell[c]{ $\tilde \gO(n+n^{6/7} \eps^{-2/7})$ \\ (\citet{tsedenov2026optimal}) } & \makecell[c]{$\Omega(n+n^{6/7} \eps^{-2/7})$ \\ (\citet{tsedenov2026optimal})} \\
        \cline{2-4} \addlinespace
       & Individual & \makecell[c]{ $\tilde \gO(n+n^{5/7} \eps^{-2/7})$ \\ (This Paper)} & \makecell[c]{$\Omega(n+n^{5/7} \eps^{-2/7})$ \\ (This Paper) } \\ 
        \hline
        \end{tabular}
\end{table*}


\paragraph{AI Usage.} 
The lower bound was proved by the authors, while the upper bound was discovered by ChatGPT 6 Astra. To ensure reproducibility, the corresponding chat link is publicly available \footnote{https://chatgpt.com/share/6acef320-0c3c-83ea-bafb-ac1554d1ea4e}.




\section{Related Works}


\subsection{Finite-Sum Optimization Under Individual Smoothness and Convexity}
When each $f_i(\vx)$ is convex and smooth, variance-reduced stochastic first-order methods such as SAG \citep{roux2012stochastic}, SAGA \citep{defazio2014saga}, Finito \citep{defazio2014finito}, MISO \citep{mairal2015incremental}
SDCA \citep{shalev2014accelerated}, SVRG \citep{johnson2013accelerating,xiao2014proximal}, and SARAH \citep{nguyen2017sarah} achieve the SFO complexity of 
$\gO( (n+ \kappa)\log \eps^{-1})$ for strongly convex problems with condition number $\kappa$, or the complexity of 
$\gO((n + \epsilon^{-1}) \log \epsilon^{-1})$ for convex problems using the black-box reduction in \cite{allen2016optimal}. By using these methods to solve the proximal subproblems in accelerated proximal-point methods, the Catalyst/APPA acceleration \citep{frostig2015regularizing,lin2018catalyst} can achieve the near-optimal complexity of $\gO( (n + n^{1/2} \eps^{-1/2}) \log \epsilon^{-1})$. Improved direct accelerated methods Katyusha \citep{allen2018katyusha}/Varag \citep{lan2019unified} can achieve a faster rate of $\gO(n \min \{ \log \eps^{-1} , \log n\} + n^{1/2} \eps^{-1/2} )$. More recently, VRADA \citep{song2020variance}/RECAPP \citep{carmon2022recapp} improved the ISO complexity to $\gO(n \log \log n + n^{1/2} \eps^{-1/2} )$, which nearly matches the lower bound of $\Omega(n + n^{1/2} \eps^{-1/2})$ lower bound~\citep{woodworth2016tight} up to a $\log \log n$ factor in the first term.


\subsection{Finite-Sum Optimization Under Averaged Smoothness and Convexity}

Motivated by applications such as PCA and SVD \citep{saad2011numerical} where individual convexity may not hold, there is also a line of work studying the setting under averaged smoothness and convexity. 
\citet{shalev2016sdca} proposed the dual-free SDCA  that achieves the SFO complexity of $\gO( (n + \epsilon^{-2}) \log \epsilon^{-1})$; \citet{allen2016improved} showed that SVRG with a shaper analysis achieves the complexity of $\gO( n \log \eps^{-1} + \epsilon^{-2})$. Finally, \citet{allen2018katyushax} proposed the KatyushaX method that achieves the optimal SFO complexity of
$\gO( n + n^{3/4} \epsilon^{-1/2})$ that exactly matches the lower bound \citep{zhou2019lower}. Very recently, \citet{tsedenov2026optimal} proposed a near-optimal variance-reduced second-order algorithm that achieves a near-optimal SSO complexity of $\gO(n\log n + n^{6/7} \eps^{-2/7})$, which matches their lower bound of $\Omega(n+ n^{6/7} \eps^{-2/7})$ under averaged smoothness and convexity up to a factor of $\log n$. But the optimal ISO complexity under individual smoothness and convexity remains open.

\subsection{Finite-Sum Optimization Without Convexity}

There is also a line of work studying nonconvex finite-sum optimization, focusing on finding a weaker solution notion of stationary points or local minima.
\citet{reddi2016stochastic,allen2016variance,lei2017non} extended SVRG to nonconvex optimization and achieved the SFO complexity of $\gO(n+ n^{2/3} \epsilon^{-2})$ for finding $\epsilon$-stationary points. Later, \citet{fang2018spider,zhou2020stochastic} proposed SPIDER/Nested-SVRG that achieves an optimal SFO complexity of $\gO(n+n^{1/2}\epsilon^{-2})$ under both averaged smoothness \citep{fang2018spider,zhou2019lower} and individual smoothness \citep{peng2026dense}. 
\citet{agarwal2017finding} proposed a faster method that requires $\tilde \gO(n \eps^{-3/2} + n^{3/4} \eps^{-7/4} )$ stochastic Hessian-vector-product oracles for finding an $\eps$-local minima under additional second-order smoothness.

The same technique also generalizes to stochastic second-order methods for nonconvex optimization. Under the individual smoothness, \citet{zhou2019stochastic} proposed the stochastic variance-reduced cubic Newton (SVRC) method that achieves the SSO complexity of $\tilde \gO(n+ n^{4/5} \epsilon^{-3/2} )$, \citet{shen2019stochastic,zhou2022dimension} used SPIDER-type recursive estimators to improve the complexity to $\gO(n+n^{3/4} \epsilon^{-3/2} )$, which is optimal due to the recent lower bounds \citep{emmenegger2022oracle,wu2026matching}. In addition, \citet{wu2026matching} showed that the same upper and lower bound of $\Theta(n+ n^{3/4} \eps^{-3/2})$ also holds under the averaged smoothness assumption. Differing from their finding in nonconvex optimization, our work demonstrates a clear separation in the optimal SSO complexity for convex finite-sum optimization under individual and averaged smoothness.

\subsection{Optimization with Lazy Second-Order Oracles}

Another line of work considers algorithms under a simpler setting that uses full gradients and stochastic Hessians, where the gradient queries are not counted into the complexity. \citet{wang2019stochastic,zhou2019stochastic} proposed the lite stochastic variance-reduced cubic (Lite-SVRC) method that achieve the stochastic Hessian oracle (SHO) complexity of $\gO(n +n^{2/3} \epsilon^{-3/2})$ for finding $\eps$-stationary points for nonconvex finite-sum optimization. \citet{doikov2023second} proposed the lazy Hessian technique that reuses Hessians every $n$ iterations, and showed an SHO complexity to $\gO(n + n^{1/2}\eps^{-3/2})$. \cite{adil2025balancing,chen2026faster} proposed improved methods that achieve the $\gO(n + n^{1/3} \epsilon^{-3/2})$ SHO complexity, which is also recently shown to be tight for the nonconvex case in \citet{wu2026gradient}. For the convex case that is more related to this paper, \citet{zhang2026near} recently proposed a method that achieves the SHO complexity of $\tilde \gO(n + n^{1/7} \eps^{-2/7})$ that nearly matches their lower bound. 



\section{Preliminaries}

\paragraph{Notations.} 

In this paper, we use $\| \,\cdot\,\|$ to denote the Euclidean norm of a vector or the spectral norm of a matrix. We let $v_{[j]}$ be the $j$-th coordinate of a vector $\vv$ and let $H_{[i,j]}$ be the $(i,j)$-th element of a matrix $\mH$. We also use standard asymptotic notation, which denotes $g = \gO(h)$ (or $h = \Omega(g)$) if $g(s) \le c h(s)$ for some constant $c > 0$ and all $s \in \gS$. We write $h = \Theta(g)$ if $h = \gO(g)$ and $h = \Omega(g)$. The notation $\tilde \gO(\,\cdot\,)$, $\tilde \Omega(\,\cdot\,)$, and $\tilde \Theta(\,\cdot\,)$ hide all logarithmic factors.

\subsection{Main Assumptions}

For Problem (\ref{prob:main}), we assume that each individual function $f_i$ is convex and has Lipschitz continuous Hessians, and their average has a bounded minimizer. 


\begin{asm} \label{asm:main}
We say $\{f_i\}_{i=1}^n \in \gF(R,L,n)$ if for some $d>0$, these functions satisfy the following.
\begin{enumerate}[label=(\alph*)]
    \item Each function $f_i: \sR^d \rightarrow \sR$ is convex and has $L$-Lipschitz continuous Hessians, \textit{i.e.},
    \begin{align*}
        \Vert \nabla^2 f_i(\vx) - \nabla^2 f_i(\vy) \Vert \le L \Vert \vx-  \vy \Vert, \quad \forall i \in [n], ~~ \forall \vx, \vy \in \sR^d.
    \end{align*}
    \item The average $f = (1/n) \sum_{i=1}^n f_i$ has 
    at least one minimizer $\vx^* \in \arg \min_{\vx \in \sR^d} f(\vx)$ such that $\Vert \vx^* \Vert \le R$. 
\end{enumerate}
\end{asm}
 
Very recently, \citet{tsedenov2026optimal} considered the complexity under a more general setting under averaged convexity and smoothness, which relaxes Assumption \ref{asm:main}(a) to (a1) the averaged function $f = (1/n) \sum_{i=1}^n f_i$ is convex and allows each individual function $f_i$ to be nonconvex (a2) the individual functions $\{f_i \}_{i=1}^n$ are second-order $\bar L$-averaged smooth, \textit{i.e.},  $ (1/n) \sum_{i=1}^n \| \nabla
^2 f_i(\vx) - \nabla^2 f_i(\vy) \|^2 \le \bar L^2 \| \vx - \vy \|^2$. The individual convexity and smoothness we consider are strictly stronger conditions. Nonetheless, they are standard and naturally satisfied in many machine learning applications such as logistic regression or soft $\ell_\infty$-regression \citep{doikov2023second,nesterov2005smooth,bullins2020highly}. It is clear that the smoothness constants satisfy the following relationship.

\begin{prop}[{\citet[Proposition 1]{luo2022complexity}}] \label{prop:rela-smoothness}
If each individual function $f_i$ has $L$-Lipschitz continuous Hessians, then $\{f_i \}_{i=1}^n$ are also second-order $L$-averaged smooth. Conversely, if the individual functions  $\{f_i \}_{i=1}^n$ are second-order $\bar L$-averaged smooth, then each  $f_i$ has $\sqrt{n}\bar{L}$-Lipschitz continuous Hessians. Equivalently,
\begin{align} \label{eq:rela-smoothness}
    \bar L \le L \le \sqrt{n} \bar L, \quad \text{when} ~~ L < + \infty.
\end{align}
\end{prop}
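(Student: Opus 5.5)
Both inequalities follow directly from the definitions. Write $H_i(\vx) = \nabla^2 f_i(\vx)$. The averaged smoothness condition is $(1/n)\sum_{i=1}^n \|H_i(\vx) - H_i(\vy)\|^2 \le \bar L^2 \|\vx-\vy\|^2$ for all $\vx,\vy$, and $L$ denotes the smallest constant such that each $f_i$ has $L$-Lipschitz Hessian.

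\emph{First direction.} Assume each $f_i$ satisfies $\|H_i(\vx)-H_i(\vy)\| \le L\|\vx-\vy\|$. Squaring each term and averaging over $i$ gives
\begin{align*}
\frac{1}{n}\sum_{i=1}^n \|H_i(\vx)-H_i(\vy)\|^2 \le \frac{1}{n}\sum_{i=1}^n L^2\|\vx-\vy\|^2 = L^2\|\vx-\vy\|^2 .
\end{align*}
Hence the family is second-order $L$-averaged smooth. Since $\bar L$ is the best averaged constant, $\bar L \le L$.

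\emph{Converse direction.} Assume averaged smoothness with constant $\bar L$. Fix an index $j$. Because every summand is nonnegative, we can drop all terms except the $j$-th:
\begin{align*}
\|H_j(\vx)-H_j(\vy)\|^2 \le \sum_{i=1}^n \|H_i(\vx)-H_i(\vy)\|^2 \le n\bar L^2\|\vx-\vy\|^2 .
\end{align*}
Taking square roots shows that $f_j$ has $\sqrt{n}\,\bar L$-Lipschitz Hessian. As $j$ was arbitrary, the individual constant satisfies $L \le \sqrt n\,\bar L$.

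Putting the two directions together yields \eqref{eq:rela-smoothness}. The hypothesis $L<+\infty$ is only needed so that the left inequality compares finite quantities.

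There is no real obstacle. The only point requiring care is that in \eqref{eq:rela-smoothness} both $L$ and $\bar L$ must be read as the best (smallest) constants, since the claimed inequalities are between those optimal values.
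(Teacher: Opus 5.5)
Your proof is correct and is the standard argument. The paper gives no proof of its own: it cites Luo et al.\ and calls the relation clear. Squaring and averaging the individual bounds to get $\bar L \le L$, and discarding the nonnegative summands to get $L \le \sqrt{n}\,\bar L$, is exactly the reasoning behind it, and your reading of $L$ and $\bar L$ as the optimal constants is the right interpretation of the two-sided inequality.
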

The main goal of this work is to show a better complexity 
by leveraging the convexity of each individual function
when assuming bounded $L$ rather than $\bar L$.
\subsection{Oracle Complexity}

Under Assumption \ref{asm:main}, we aim to find an $\epsilon$-solution of the averaged function, defined as follows.

\begin{dfn} \label{dfn:eps-solu}
We say $\hat \vx \in \sR^d$ is an $\epsilon$-solution to Problem (\ref{prob:main}) if we have $f(\hat \vx) - f^* \le \epsilon$, where $f^* = \min_{\vx \in \sR^d} f(\vx)$ is the optimal value of $f$.
\end{dfn}

In the well-studied setting of finite-sum first-order optimization \citep{agarwal2015lower,woodworth2016tight,zhou2019lower}, the oracle complexity is defined as the number of stochastic first-order oracle (SFO) calls required to find an $\eps$-solution, where SFO is the mapping defined as $(i, \vx) \mapsto \nabla f_i(\vx) $ that returns the gradient of an individual function given the index $i$ and query point $\vx$. In this paper, we consider the complexity with access to the stronger 
stochastic second-order oracle (SSO) \citep{emmenegger2022oracle,arjevani2017oracle} that additionally returns the Hessian of the selected individual function.

\begin{dfn} \label{dfn:sso}
The stochastic second-order oracle $\sO$ is defined as the mapping $(i,\vx) \mapsto (\nabla f_i(\vx), \nabla^2 f_i(\vx))$. 
\end{dfn}

This paper studies the complexity of the following zero-respecting periodic stochastic second-order algorithms, which assume that the algorithm periodically queries a full-batch second-order oracle once every~$n$ iterations, and generates the next query on the support of previous oracle calls.

\begin{dfn} \label{dfn:zero-respecting-alg}
Let the support of a vector $\vv$ and matrix $\mH$ be defined as $\operatorname{supp}(\vv) = \{j \mid v_{[j]} \neq 0\}$ and $\operatorname{supp}(\mH) = \{j \mid \exists k, H_{[j,k]} \neq 0 \text{ or } H_{[k,j]} \neq 0\}$, respectively.
A zero-respecting periodic stochastic second-order algorithm $\mathcal A$ operating on $f=(1/n) \sum_{i=1}^n f_i$ generates sequence of iterates $\{\vx_t\}_{t=1}^{T}$ satisfies
\[
\operatorname{supp}(\vx_t) \subseteq \bigcup_{s=1}^{t} \operatorname{supp}(\sO_s), \quad \forall t \ge 1; \qquad \vx_0 = \vzero,
\]
where the support of an oracle response $\sO_s$ is
\[
\operatorname{supp}(\sO_s) := \begin{cases}
\bigcup_{i=1}^n \left( \operatorname{supp}(\nabla f_i(\vx_{s-1})) \cup \operatorname{supp}(\nabla^2 f_i(\vx_{s-1})) \right), & \textit{if} \quad (s-1) \bmod n =0; \\
\operatorname{supp}(\nabla f_{I_s}(\vx_{s-1})) \cup \operatorname{supp}(\nabla^2 f_{I_s}(\vx_{s-1})), ~~ I_t\sim\operatorname{Unif}([n]) & \text{otherwise}.
\end{cases}
\]
\end{dfn}


The assumptions above are standard and contain a lot of practical algorithms. The zero-respecting assumption $\operatorname{supp}(\vx_t) \subseteq \bigcup_{s=1}^{t} \operatorname{supp}(\sO_s)$ is usually satisfied by iterative optimization algorithms \citep{carmon2020lower,nesterov2018lectures}. The periodic assumption (querying full-batch oracles if $(s-1) \bmod n =0$) is not only typically satisfied by variance-reduced methods \citep{johnson2013accelerating,fang2018spider,zhou2020stochastic,tsedenov2026optimal}, but also without loss of generality since the same lower bound up to constants holds for algorithms that query the full-batch oracles at any iteration by a simple reduction. See, \textit{e.g.}, \citet[Lemma 3.3]{zhang2026near}. Finally, our algorithm class is strictly larger than the linear-span algorithms with pure random indexes in \citet{tsedenov2026optimal}.

\section{Lower Complexity  Bound}


The main starting point of this paper is that the tight lower bound of $\Omega(n+ n^{6/7} (\bar L / \eps)^{2/7}  )$ under $\bar L$-averaged smoothness in \citet{tsedenov2026optimal} is weakened under individual smoothness in light of Proposition~\ref{prop:rela-smoothness}. 
Briefly speaking,  by observing that the individual smoothness of their constructed functions 
achieves equality with the right-hand side of \eqref{eq:rela-smoothness}, \textit{i.e.}, $\bar L = L / \sqrt{n}$, we can obtain the following lower bound after rescaling the functions to satisfy the $L$-individual smoothness:
\[
\Omega\left(n+ n^{6/7} \left( \frac{L}{\sqrt{n} \eps} \right)^{2/7}  \right  ) = \Omega\left(n+ n^{5/7} \left( \frac{L}{\eps} \right)^{2/7}  \right ).
\]

Formally, for $a=R/\sqrt{2K}$, $\lambda = L/48$, and $c(j)=1+((j-1)\bmod n)$, we construct each component of the hard instance $f_i: \sR^{2K} \rightarrow \sR$ as
\begin{equation}\label{eq:instance}
  f_i(\vx)=\lambda\sum_{\substack{1\le j\le 2K\\c(j)=i}}\psi_j(\vx),\qquad
  \psi_1(\vx)=|a-x_{[1]}|^3,\quad
  \psi_j(\vx)=|x_{[j-1]}-x_{[j]}|^3\quad(j\ge2),
\end{equation}


Under Construction (\ref{eq:instance}), the averaged function in is then the hard instance for full-batch second-order optimization \citep{arjevani2019oracle} scaled by $1/n$, that is, $f(\vx) \triangleq (1/n)\sum_{i=1}^n f_i(\vx) = (\lambda/ n) \sum_{j=1}^{2K} \psi_j(\vx)$, which is a zero-chain~\citep{carmon2020lower} such that an iterative algorithm can only explore the new coordinates one by one.  The assignment of $c(j)$ makes optimizing the finite-sum problem much harder for $n > 1$ since a random index $I_t$ can only reveal the next link with probability~$1/n$, which significantly slows down the progress. Using the terminology in \citet{arjevani2023lower}, this construction is essentially a probability $1/n$-zero-chain, thus, its last coordinates take $\Omega(nK)$ iterations to activate with high probability. 


\begin{lem}\label{lem:chain}
For $t\ge n$, let $K=\lceil 8T/n\rceil$. For any zero-respecting algorithm in Definition \ref{dfn:zero-respecting-alg}, the algorithm's output $\vx_T$ satisfies $(\vx_{T})_{[2K]} = 0$ for its last coordinate with probability at least $13/16$.
\end{lem}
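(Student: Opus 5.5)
We argue through a zero-chain progress count. Let $P_t$ be the largest index $j$ with $\operatorname{supp}(\vx_s)\cap\{j\}\neq\emptyset$ for some $s\le t$; equivalently, $P_t$ is the number of discovered coordinates, and we set $P_0=0$. The first step is a structural observation about Construction (\ref{eq:instance}). For a point $\vx$ with $\operatorname{supp}(\vx)\subseteq\{1,\dots,p\}$, the term $\psi_j(\vx)$ vanishes together with its gradient and Hessian whenever $j\ge p+2$, because it is cubic in the difference $x_{[j-1]}-x_{[j]}=0$. For $j\le p+1$, $\psi_j$ depends only on coordinates $j-1,j\le p+1$. Hence $\nabla f_i(\vx)$ and $\nabla^2 f_i(\vx)$ are supported in $\{1,\dots,p+1\}$, and coordinate $p+1$ can appear only if $\psi_{p+1}$ belongs to $f_i$, that is, only if $c(p+1)=i$. (When $p=0$, $\psi_1$ carries the constant $a$, and the same reasoning applies.) It follows that each oracle call raises the progress by at most one. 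A stochastic call at step $s$ raises it only if $I_s=c(P_{s-1}+1)$, while a full-batch call at $(s-1)\bmod n=0$ always raises it by exactly one at most.

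The second step is to bound $P_T$. Among $T$ iterations there are at most $\lceil T/n\rceil$ full-batch calls, contributing at most $\lceil T/n\rceil$ to the progress. Every other increment requires $I_s$ to equal a fixed index determined by the past. Since $I_s$ is uniform on $[n]$ and independent of $\mathcal F_{s-1}$, each such increment happens with conditional probability at most $1/n$. The stochastic increments are therefore dominated by a sum of $T$ Bernoulli$(1/n)$ variables, formally by a coupling or a supermartingale argument, and their expected total is at most $T/n$. Markov's inequality then shows that the stochastic increments exceed $16T/(3n)$ with probability at most $3/16$. A sharper bound could use Chernoff, but Markov already gives the stated constant.

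The conclusion follows by counting. With probability at least $13/16$,
\[
P_T\le \lceil T/n\rceil+\tfrac{16}{3}\tfrac{T}{n}\le \tfrac{T}{n}+1+\tfrac{16T}{3n}< \tfrac{8T}{n}\cdot 2\le 2K,
\]
where the last inequalities use $T\ge n$ (the condition read as $t\ge n$ in the statement), so that $1\le T/n$. Since $P_T<2K$, the last coordinate of $\vx_T$ is zero. There is ample slack here, because $2K\ge 16T/n$.

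The main obstacle is making the one-coordinate-per-call claim rigorous under the exact zero-respecting definition. The iterate $\vx_t$ is allowed the support of $\sO_t$, whose query point is $\vx_{t-1}$, so the induction has to be on the cumulative support $\bigcup_{s\le t}\operatorname{supp}(\sO_s)\subseteq\{1,\dots,P_t\}$ rather than on the iterate itself. The dependence between the $I_s$ and the adaptive choice of the needed index also has to be handled by conditioning on $\mathcal F_{s-1}$.
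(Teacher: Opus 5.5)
Your proposal is correct and follows essentially the same route as the paper: track how far along the chain the oracle has revealed coordinates, note that each full-batch call adds at most one link and each stochastic call adds one only when $I_s$ equals the index $c(\cdot)$ of the next link (conditional probability at most $1/n$), then conclude with Markov's inequality using $K=\lceil 8T/n\rceil$ and $T\ge n$. The paper applies Markov to the whole count via $\E N_T\le 3T/n$, whereas you bound the $\lceil T/n\rceil$ full-batch increments deterministically and apply Markov only to the stochastic ones, an inessential variation; as you flag yourself, the progress measure must be the largest index in the cumulative oracle support $\bigcup_{s\le t}\operatorname{supp}(\sO_s)$ rather than in the iterates' supports, since with the iterate-based $P_t$ an earlier-revealed but unused coordinate could enter $\vx_s$ at a step with arbitrary $I_s$, and with that definition your argument goes through.
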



For any vector $\vx \in \sR^{2K}$ whose last coordinate is zero, we can use Jensen's inequality to give a lower bound on the suboptimality gap of $f$ similar to the analysis in \citet{arjevani2019oracle}.

\begin{lem}\label{lem:gap}
For any vector $\vx \in \sR^{2K}$ that satisfies 
$\vx_{[2K]} = 0$, we have
\begin{equation}\label{eq:gap}
 f(\vx)-f^\star\ge \frac{\lambda a^3}{4 n K^2}.
\end{equation}
\end{lem}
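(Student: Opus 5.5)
Since $\psi_1=|a-x_{[1]}|^3$ and $\psi_j=|x_{[j-1]}-x_{[j]}|^3$ for $j\ge 2$, the point $\vx^\star=a\1$ makes every term vanish, and every term is nonnegative. Hence $f^\star=0$ (which we identify with $f^\star$ in \eqref{eq:gap}) and $f(\vx)-f^\star = (\lambda/n)\sum_{j=1}^{2K}\psi_j(\vx)$. So it suffices to show that for every $\vx$ with $\vx_{[2K]}=0$,
\begin{align*}
\sum_{j=1}^{2K}\psi_j(\vx)\ \ge\ \frac{a^3}{4K^2}.
\end{align*}

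The key step is a telescoping identity followed by Jensen's inequality. Write $\delta_1=a-x_{[1]}$ and $\delta_j=x_{[j-1]}-x_{[j]}$ for $2\le j\le 2K$. Then
\[
\sum_{j=1}^{2K}\delta_j = a - x_{[2K]} = a.
\]
By convexity of $t\mapsto|t|^3$, Jensen's inequality (equivalently Hölder) gives
\begin{align*}
\frac{1}{2K}\sum_{j=1}^{2K}|\delta_j|^3 \ \ge\ \Bigl|\frac{1}{2K}\sum_{j=1}^{2K}\delta_j\Bigr|^3 = \frac{a^3}{8K^3}.
\end{align*}
Therefore $\sum_j\psi_j(\vx)\ge a^3/(4K^2)$. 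Multiplying by $\lambda/n$ gives \eqref{eq:gap}.

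There is no real obstacle. The only points needing care are confirming $f^\star=0$, which follows from nonnegativity of each term together with attainment at $a\1$, and using $a>0$ so that $|a|^3=a^3$. Only the vanishing of the last coordinate is used; all other coordinates of $\vx$ are arbitrary.
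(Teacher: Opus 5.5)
Your proposal is correct and follows the paper's own argument: the paper also notes that, with $x_{[2K]}=0$, the $2K$ differences $a-x_{[1]}, x_{[1]}-x_{[2]}, \ldots, x_{[2K-1]}$ telescope to $a$, applies Jensen's inequality to $|\cdot|^3$, and uses $f^\star=0$ attained at $a\sum_{j}\ve_j$. The only difference is cosmetic: the paper phrases the reduction through the truncated chain $f^{(2K-1)}$, while you substitute $x_{[2K]}=0$ directly.
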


This lemma immediately gives the following lower bound by substituting the parameter settings of $a = R/ \sqrt{2K}$ and $\lambda = L/48$ in our construction.

\begin{thm}\label{thm:lower}
For $n\ge1$, $L,R>0$, and $T\ge n$, set $K=\lceil8T/n\rceil$. For any algorithm in Definition \ref{dfn:zero-respecting-alg}, the hard instance $f: \sR^{2K} \rightarrow \sR$ in \eqref{eq:instance} satisfies Assumption \ref{asm:main} and the algorithm output $\vx_T$ satisfies
\[
f(\vx_T) - f^\ast \ge \frac{LR^3}{48 n (2K)^{7/2}}, \quad \text{with probability } 13/16.
\]
Consequently, the total iteration complexity to find an $\eps$-solution is at least $\Omega\bigl(n^{5/7}(LR^3/\epsilon)^{2/7}\bigr)$.
\end{thm}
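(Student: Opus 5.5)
The proof has three parts: check that the construction \eqref{eq:instance} lies in $\gF(R,L,n)$, combine Lemma~\ref{lem:chain} with Lemma~\ref{lem:gap}, and invert the resulting bound in $T$.

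\textbf{Membership in $\gF(R,L,n)$.} Each $\psi_j$ is a convex function $u\mapsto|u|^3$ composed with a linear form $u_j(\vx)=\langle \vv_j,\vx\rangle$ (plus a constant for $j=1$). Here $\vv_j=\ve_{j-1}-\ve_j$ for $j\ge2$ and $\vv_1=-\ve_1$, so $\|\vv_j\|\le\sqrt2$. Hence every $f_i$ is convex.

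For the Hessian, $\nabla^2\psi_j(\vx)=6|u_j(\vx)|\,\vv_j\vv_j^\top$. This gives
\[
\|\nabla^2 f_i(\vx)-\nabla^2 f_i(\vy)\|\le 6\lambda\Bigl\|\sum_{c(j)=i}\bigl|\,|u_j(\vx)|-|u_j(\vy)|\,\bigr|\,\vv_j\vv_j^\top\Bigr\|.
\]
I would split on $n$:
\begin{itemize}
\item If $n\ge2$, the indices $j$ with $c(j)=i$ are spaced at least $2$ apart. The vectors $\vv_j$ then have disjoint supports, so the matrix is block diagonal and its norm is at most $\max_j \sqrt2\|\vx-\vy\|\cdot 2$.
\item If $n=1$, bound the norm by $\max_j|\Delta u_j|\cdot\|\sum_j\vv_j\vv_j^\top\|$. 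The matrix $\sum_j\vv_j\vv_j^\top$ is a path-Laplacian-type matrix with spectral norm at most $4$.
\end{itemize}
In either case the Lipschitz constant is at most $6\lambda\cdot\sqrt2\cdot4<48\lambda=L$, which is where the choice $\lambda=L/48$ comes from.

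For part (b) of Assumption~\ref{asm:main}, $f\ge0$ and $f(a\1)=0$, so $\vx^*=a\1$ is a minimizer with $\|\vx^*\|=a\sqrt{2K}=R$.

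\textbf{The suboptimality bound.} Lemma~\ref{lem:chain} gives $(\vx_T)_{[2K]}=0$ with probability at least $13/16$. On that event, Lemma~\ref{lem:gap} gives
\[
f(\vx_T)-f^*\ge\frac{\lambda a^3}{4nK^2}.
\]
Substituting $\lambda=L/48$ and $a^3=R^3(2K)^{-3/2}$, and using $4K^2=(2K)^2$, this equals exactly $LR^3/\bigl(48n(2K)^{7/2}\bigr)$, which is the first claim.

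\textbf{The complexity bound.} Suppose the algorithm outputs an $\eps$-solution with probability exceeding $3/16$. Then the event above must intersect success, so
\[
(2K)^{7/2}\ge \frac{LR^3}{48n\eps}.
\]
Since $T\ge n$, we have $K\le 8T/n+1\le 9T/n$, so
\[
T\gtrsim n\Bigl(\frac{LR^3}{n\eps}\Bigr)^{2/7}=n^{5/7}\Bigl(\frac{LR^3}{\eps}\Bigr)^{2/7}.
\]
The $n$ term is immediate from the standing requirement $T\ge n$.

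\textbf{Main obstacle.} The delicate step is the individual Hessian-Lipschitz check, specifically the $n=1$ case where the chain links overlap. The spectral-norm bound on $\sum_j\vv_j\vv_j^\top$ must be verified carefully so that the constant stays below $48$. Everything else is substitution.
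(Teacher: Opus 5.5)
Your proposal is correct and is essentially the paper's proof. Your $n=1$ estimate is exactly the paper's bound $\|\nabla^2 f_i(\vx)-\nabla^2 f_i(\vy)\|\le 6\lambda\|\mB_i\|^2\|\mD_i(\vx)-\mD_i(\vy)\|\le 24\sqrt2\,\lambda\|\vx-\vy\|$, and that bound already holds for every $n$ because $\mB_i^\top\mB_i\preceq\mB^\top\mB$, so your disjoint-support case split is valid but unnecessary. The remaining steps coincide with the paper's: the minimizer $a\1$, combining Lemmas~\ref{lem:chain} and~\ref{lem:gap}, and substituting $a$ and $\lambda$.

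One small slip, outside what this statement requires: your aside that the $\Omega(n)$ term follows from the hypothesis $T\ge n$ is not an argument, since that hypothesis only restricts which runs the theorem covers. The paper obtains the $n$ term from a separate hiding construction.
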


Note that an additional $\Omega(n)$ trivially holds by randomly hiding a non-zero function in one of the $n$ components. See, \textit{e.g.}, \citet{woodworth2016tight}. Therefore, the above theorem implies the total lower bound of $\Omega(n + n^{5/7} \eps^{-2/7})$ for zero-respecting stochastic second-order algorithms. We believe that the same lower bound also holds for randomized algorithms by modifying our construction to a robust zero-chain \citep{arjevani2023lower,garg2021near}, and leave a rigorous proof for future work.

\section{Upper Complexity Bound}

Very recently, \citet{tsedenov2026optimal} established 
 the upper bound of $\tilde \gO(n + n^{6/7} \eps^{-2/7})$ 
under averaged smoothness and convexity, which definitely apply to our setting. However, their result still has a gap of $n^{1/7}$ compared to our lower bound in Theorem \ref{thm:lower} under the stronger individual smoothness and convexity. This section closes this gap by proposing a new stochastic second-order algorithm. 

\subsection{Second-Order Self-Bounding Inequality}
Before proving our new upper bound, we first review the key self-bounding inequality for first-order variance-reduced methods \citep{johnson2013accelerating,allen2018katyusha} and then establish an analogy for second-order methods. For the first-order stochastic variance-reduced gradient (SVRG) estimator
\citep{johnson2013accelerating,allen2018katyusha} \(\vu(\vv; \vw)=\nabla f_i(\vv)-\nabla f_i(\vw )+\nabla f(\vw)\) at query $\vv$ using a snapshot~$\vw$ and random sampled index $i\sim \text{Unif}([n])$, its second-order moment satisfies
\[
\E \| \vu(\vv;\vw) \|^2 \le \frac{2}{n} \sum_{i=1}^n ( \| \nabla f_i(\vv) - \nabla f_i(\vx^*) \|^2 + \| \nabla f_i(\vw) - \nabla f_i(\vx^*) \|^2) \le  4 \ell ( D_{f}(\vv,\vx^*) + D_f(\vw,\vx^*)),
\]
where the last step uses the self-bounding inequality (\ref{eq:cocorcive-first-order}) of $ \|\nabla f_i(\vx) - \nabla f_i(\vy) \|^2 \le 2 \ell D_{f_i}(\vx,\vy)$ under first-order $\ell$-individual smoothness and convexity, without which one can only obtain a looser bound of $ 2 \ell^2 (\| \vv - \vx^* \|^2 + \| \vw - \vx^* \|^2 ) $ for the right-hand side using the $\ell$-averaged smoothness. 

For designing variance-reduced second-order methods \citep{zhou2019stochastic,wang2019stochastic,shen2019stochastic,tsedenov2026optimal}, we typically require an upper bound of the stochastic Taylor residual estimator $\vr(\vv;\vw) = \nabla f_i(\vv) - \nabla f_i(\vw) - \nabla^2 f_i(\vw) (\vv- \vw)$ with  $i\sim \text{Unif}([n])$. \citet{tsedenov2026optimal} bounds $\E \| \vr(\vv;\vw) \|^2 \le (\bar L^2/4) \| \vv - \vw \|^4$ under second-order $\bar L$-averaged smoothness, which does not rely on individual convexity. In this paper, we exploit the following tighter bound under the additional smoothness and convexity assumption on each $f_i$:
\begin{lem} \label{lem:bregman-ieq}
If $f: \mathbb{R}^d \rightarrow \sR$ is convex and has $L$-Lipschitz continuous Hessians, then, for all $\vv,
\vw\in\mathbb{R}^d$,
\begin{align} \label{eq:self-bounding}
   \| \nabla f(\vv) - \nabla f(\vw) - \nabla^2 f(\vw) (\vv- \vw)  \|^2 \le 4 L \| \vv - \vw \| D_f(\vv,\vw). 
\end{align}
\end{lem}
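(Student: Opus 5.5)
The plan is to mimic the textbook proof of the first-order self-bounding inequality (\ref{eq:cocorcive-first-order}). That proof takes a convex function whose minimum value is known, $\phi(\vx)=D_f(\vx,\vw)\ge 0$ with $\phi(\vw)=0$, and lower-bounds $\phi(\vv)$ by the decrease that a well-chosen step from $\vv$ would produce. Set
\[
\Delta=\vv-\vw,\qquad r=\|\Delta\|,\qquad \vs=\nabla f(\vv)-\nabla f(\vw)-\nabla^2 f(\vw)\Delta,
\]
so that $\vs$ is the Taylor remainder on the left-hand side of (\ref{eq:self-bounding}). Write $\mH_\tau=\nabla^2 f(\vw+\tau\Delta)$ and $\mD_\tau=\mH_\tau-\mH_0$. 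The Lipschitz Hessian assumption gives $\|\mD_\tau\|\le L\tau r$. Convexity gives $\mH_\tau\succeq 0$ for every $\tau$.

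The first step uses the integral representations
\[
\vs=\int_0^1 \mD_\tau\Delta\,d\tau,\qquad D_f(\vv,\vw)=\int_0^1(1-\tau)\,\Delta^\top\mH_\tau\Delta\,d\tau .
\]
The goal is to show $\|\vs\|^2\le 4Lr\,D_f(\vv,\vw)$. The natural pointwise tool is the scalar fact that $p,q\ge 0$ and $|p-q|\le\delta$ imply $(p-q)^2\le\delta(p+q)$. Its matrix analogue would be
\[
\|\mD_\tau\Delta\|^2\le L\tau r\,\Delta^\top(\mH_\tau+\mH_0)\Delta .
\]
This analogue is delicate: the sandwich $-(\mH_\tau+\mH_0)\preceq \mD_\tau\preceq \mH_\tau+\mH_0$ does not by itself give $|\mD_\tau|\preceq \mH_\tau+\mH_0$. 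A second problem is the weights. The bound on $D_f$ carries the factor $(1-\tau)$, which vanishes at $\tau=1$, while the remainder is dominated by large $\tau$. A naive Cauchy--Schwarz in $\tau$ will therefore fail.

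Because of both problems, I would instead commit to a descent argument that uses only $\phi\ge 0$ at a cleverly chosen point. Take $\vy=\vv-t\,\vs/\|\vs\|$ with $t>0$ to be chosen. Expanding $\phi$ around $\vw$ rather than $\vv$ gives
\[
0\le \phi(\vy)=D_f(\vy,\vw).
\]
Next, write $D_f(\vy,\vw)=D_f(\vv,\vw)+\langle\nabla f(\vv)-\nabla f(\vw),\vy-\vv\rangle+D_f(\vy,\vv)$ (the three-point identity). Then split $\nabla f(\vv)-\nabla f(\vw)=\vs+\nabla^2f(\vw)\Delta$. The linear term then contributes $-t\|\vs\|$ plus a cross term $-t\langle\nabla^2 f(\vw)\Delta,\vs\rangle/\|\vs\|$. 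The quadratic remainder $D_f(\vy,\vv)$ is bounded by $\tfrac{t^2}{2}\,\hat\vs^\top\nabla^2 f(\vv)\hat\vs+\tfrac{L}{6}t^3$, where $\hat\vs=\vs/\|\vs\|$.

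The Hessian terms must then be controlled using $\|\nabla^2 f(\vv)-\nabla^2 f(\vw)\|\le Lr$ and positive semidefiniteness. Choosing $t$ of order $\|\vs\|/(Lr)$ should make the $Lr$-sized curvature pay for the gain $t\|\vs\|$, yielding $\|\vs\|^2\lesssim Lr\,D_f(\vv,\vw)$, with constant $4$ after optimizing $t$. A useful sanity check, verifiable directly in one dimension, is $f(x)=\tfrac{L}{6}|x|^3$ with $w=0$: there $s=\tfrac{L}{2}r^2$ and $D_f=\tfrac{L}{6}r^3$, so the ratio is $3/2\le 4$.

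The main obstacle is the cross term $\langle\nabla^2 f(\vw)\Delta,\hat\vs\rangle$ together with the possibly large $\hat\vs^\top\nabla^2f(\vv)\hat\vs$. Since no upper bound on the Hessian itself is assumed, neither can be bounded by $Lr$. To handle this, I would first try to absorb both using the PSD Cauchy--Schwarz inequality
\[
|\langle \mH\Delta,\hat\vs\rangle|\le\sqrt{\Delta^\top\mH\Delta}\,\sqrt{\hat\vs^\top\mH\hat\vs},
\]
and then bound $\Delta^\top\mH_0\Delta$ and $\Delta^\top\mH_1\Delta$ in terms of $D_f(\vv,\vw)$ plus $Lr^3$-type errors. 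If that fails, I would fall back to reducing to the plane spanned by $\Delta$ and $\vs$, where the pointwise matrix inequality above can be checked by hand.
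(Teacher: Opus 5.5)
\textbf{Verdict: there is a genuine gap.} The main descent argument cannot be completed: the obstruction is structural, not just the difficulty of bounding the cross terms. (Notation below is yours: $\Delta=\vv-\vw$, and $\vs$ is the Taylor remainder.)

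Your test function $\phi=D_f(\cdot,\vw)$ has two defects:
\begin{itemize}
\item its gradient at $\vv$ is $\vs+\nabla^2 f(\vw)\Delta$, not $\vs$;
\item its curvature is $\nabla^2 f$, which is unbounded under the assumptions, so $\tfrac{t^2}{2}\hat\vs^\top\nabla^2 f(\vv)\hat\vs$ is a pure penalty that PSD Cauchy--Schwarz cannot absorb.
\end{itemize}

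Concretely, take $f(\vx)=\tfrac{L}{6}\|\vx\|^3+\tfrac{M}{2}x_{[2]}^2$ on $\sR^2$ with $\vw=\ve_2$ and $\vv=\ve_1+\ve_2$. Then $\vs=\tfrac{L}{2}(\sqrt2-1)(\ve_1+\ve_2)$, and neither $\vs$ nor $D_f(\vv,\vw)$ depends on $M$. This is because a quadratic has zero Taylor remainder and $\Delta\perp\ve_2$.

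Along your ray $\vy_t=\vv-t\hat\vs$, write $g=\tfrac{L}{6}\|\cdot\|^3$ and $c=\langle\nabla g(\vv)-\nabla g(\vw),\hat\vs\rangle$, which is independent of $M$. Convexity of $D_g(\cdot,\vw)$ gives $D_f(\vv,\vw)-D_f(\vy_t,\vw)\le ct-\tfrac{M}{4}t^2\le c^2/M$. So the inequality $D_f(\vy_t,\vw)\ge 0$ can certify at most $D_f(\vv,\vw)\ge c^2/M$. For large $M$ this is far below the required $\|\vs\|^2/(4L\|\Delta\|)$, whatever $t$ you pick.

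The fallback also fails. Its pointwise inequality is false for PSD pairs. Take $\Delta=\ve_1$,
\[
\mH_0=\begin{pmatrix}\epsilon & -\sqrt{\epsilon N}\\ -\sqrt{\epsilon N} & N\end{pmatrix},
\]
and $\mH_\tau$ equal to $\mH_0$ with the off-diagonal sign flipped. Both matrices are rank-one PSD, and $\|\mD_\tau\|=2\sqrt{\epsilon N}$. Then $\|\mD_\tau\Delta\|^2=4\epsilon N$, while $\|\mD_\tau\|\,\Delta^\top(\mH_\tau+\mH_0)\Delta=4\epsilon\sqrt{\epsilon N}$, and the ratio $\sqrt{N/\epsilon}$ is unbounded. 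Even a correct pointwise bound would not resolve the $(1-\tau)$ weighting issue you already flagged.

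The missing idea is to take the descent step in the \emph{base point}, not the endpoint. Keep $\Delta$ fixed and consider
\[
J_\tau(\vu)=\tau f(\vu+\Delta)+(1-\tau)f(\vu)-f(\vu+\tau\Delta),\qquad \tau\in(0,1).
\]
This function is nonnegative for every $\vu$ by convexity. Its Hessian is
\[
\tau\bigl[\nabla^2 f(\vu+\Delta)-\nabla^2 f(\vu+\tau\Delta)\bigr]+(1-\tau)\bigl[\nabla^2 f(\vu)-\nabla^2 f(\vu+\tau\Delta)\bigr].
\]
Because the coefficients $\tau,1-\tau,-1$ sum to zero, it involves only Hessian differences. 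Its norm is therefore at most $\ell_\tau=2\tau(1-\tau)L\|\Delta\|$, no matter how large $\nabla^2 f$ is.

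One gradient step from $\vw$, together with $J_\tau\ge 0$, gives $\|\nabla J_\tau(\vw)\|^2\le 2\ell_\tau J_\tau(\vw)$. Divide by $\tau^2$ and let $\tau\downarrow 0$, using $\nabla J_\tau(\vw)/\tau\to\vs$ and $J_\tau(\vw)/\tau\to D_f(\vv,\vw)$. This yields the inequality with exactly the constant $4$, which is the paper's proof.

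For $f\in C^3$ this is the textbook first-order argument applied to $\vu\mapsto D_f(\vu+\Delta,\vu)$. That function's gradient at $\vw$ is exactly $\vs$, and its Hessian is bounded by $2L\|\Delta\|$. Your instinct to use a nonnegative function plus one descent step was right; it was applied to the wrong variable.
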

This new second-order self-bounding inequality extends the textbook result of \citet[Theorem 2.1.5]{nesterov2018lectures} in (\ref{eq:cocorcive-first-order}) to second order.
It bounds the Taylor residual directly by the local Bregman divergence $D_f(\vv,\vw):=f(\vv)-f(\vw)-\langle\nabla f(\vw),\vv-\vw\rangle$ rather than global Hessian Lipschitzness, utilizing the convex geometry of each individual function.

\begin{algorithm*}[t]  
\caption{\texttt{A-TLN}$(f, \vx_0, \gamma, T, S,K) $} \label{alg:A-LEN}
\begin{algorithmic}[1] 
\Function{Cubic-Proximal}{$f,\vx, \gamma, S,K$} 
\State Set $H=L+2\gamma$, $\vz_0=\mathcal{C}_H(\vx)$ and $\vy_0=\mathcal{C}_H(\vz_0)$
\State Set $\Delta_0=\frac{2\sqrt2}{3\sqrt\gamma}
\|\nabla f_{\vx,\gamma}(\vy_0)\|^{3/2}$
\Repeat \label{line:repeat-begin}
\For{$s=0,\ldots,S-1$}
\State {\color{blue}$\vy_{s+1}=\texttt{TLN}(f_{\vx,\gamma}, \vy_s, \Delta_s, H , \gamma,K, 1 / (4S))$} \Comment{Implemented by Algorithm \ref{alg:VR-LEN-Gap}}
\State $\Delta_{s+1}=3\Delta_s/4$
\EndFor 
\State Set $\hat{\vy}=\mathcal{C}_H(\vy_S)$ 
\Until $\|\nabla f_{\vx,\gamma}(\hat{\vy})\|\leq (\gamma/32)\|\hat{\vy}-\vx\|^2$ \label{line:repeat-end} \\
\Return $\hat \vy$.
\EndFunction 
\State $\vv_0 = \vx_0$, $\bar \vx_0 = \vx_0$, $A_0 = 0$ 
\State $\vy_0 = \Call{Cubic-Proximal}{f,\vx_0,\gamma,S,K}$
\State Set $\lambda_0' = \lambda_0= 1/(\gamma \Vert \vy_0 - \vx_0 \Vert) $. 
\For{$t = 0,\cdots, T-1$}   \Comment{Main procedure: optimal Monteiro-Svaiter acceleration}
\State \quad $a_{t+1}' =   \lambda_{t}'(1 + \sqrt{1 + 4  A_t/ \lambda_{t}'}) / 2$, ~ $A_{t+1}' = A_t + a_{t+1}'$,~ $\bar \vx_t = (A_t \vx_t + a_{t+1}' \vv_t)/{A_{t+1}'} $ \label{line:momentum} 
\State \quad \textbf{if} $t > 0 $ \textbf{then} 
\State \quad \quad $\vy_t = \Call{Cubic-Proximal}{f,\bar{\vx}_t,\gamma,S,K}$
\State \quad \quad Set $\lambda_t = 1/(\gamma \Vert \vy_t - \bar \vx_t \Vert)$. 
\State \quad \textbf{end if} 
\State \quad  \textbf{if} $\lambda_{t} \ge  \lambda_{t}'$ \textbf{then} \label{line:begin-seacg-lambda} 
\State \quad \quad $a_{t+1} = a_{t+1}' $, $A_{t+1} = A_{t+1}'$, ~ $\vx_{t+1} = \vy_{t} $, ~ $\lambda_{t+1}' = 2 \lambda_{t}' $
\State \quad \textbf{else} 
\State \quad \quad $\beta_{t} = \lambda_{t}/\lambda_{t}'$,~ $a_{t+1} = \beta_{t} a_{t+1}'$, ~ $A_{t+1} = A_t + a_{t+1}$ 
\State \quad \quad $ \vx_{t+1} = ((1- \beta_{t}) A_t \vx_t + \beta_{t} A_{t+1}' \vy_{t}) / A_{t+1}$,~ $\lambda_{t+1}' = \lambda_{t}'/2$ 
\State \quad \textbf{end if} \label{line:end-search-lambda} 
\State \quad $\vv_{t+1} = \vv_t - a_{t+1} \nabla f(\vy_{t})$ \label{line:extra-ALEN} 
\EndFor \\
\Return $\vx_T$ 
\end{algorithmic}
\end{algorithm*}

\subsection{Reduction to Minimizing a Cubic Proximal Subproblem}
\label{sec:Reduction to a Cubic Proximal Oracle}

The rest of this section focuses on designing an efficient variance-reduced second-order method by leveraging Lemma \ref{lem:bregman-ieq}. We first simplify the design of a completely new procedure by reducing it to minimizing a simpler cubic proximal subproblem. The main reduction we apply is recently proposed by \citet{zhang2026near} for proving near-optimal oracle complexity using lazy second-order oracles, which can be equivalently be interpreted as the complexity using full gradient and stochastic Hessians. Due to the similarity of our setting to theirs, we can reuse several of their techniques with least modifications.

Following \citep{chen2026faster,zhang2026near}, we can apply the optimal Monteiro-Svaiter acceleration \citep{monteiro2013accelerated,carmon2022optimal,kovalev2022first} to design a fast algorithm by catalyzing \citep{lin2018catalyst,frostig2015regularizing} a stochastic sub-solver that solves a series of cubic proximal function
\begin{equation}\label{eq:upper-cubic-objective}
f_{\vx,\gamma}(\vy) \triangleq f(\vy)+\frac{\gamma}{3}\|\vy-\vx\|^3,
\end{equation}
where $\vx \in \sR^d$ is the proximal center and $\gamma>0$ is the regularization.  We restate the full procedure of \citet{zhang2026near} in Algorithm \ref{alg:A-LEN}, where the main procedure is exactly the optimal Monteiro-Svaiter (MS) acceleration by \citet{carmon2022optimal} that works with a cubic proximal oracle that for any $\vx \in \sR^d$ returns $\vy \in \sR^d$ satisfying the MS condition \citep{monteiro2012iteration,monteiro2013accelerated,carmon2022recapp} $\|\nabla f_{\vx,\gamma}(\vy)\|
\le (\gamma/32) \|\vy-\vx\|^2$, and the cubic proximal subroutine follows the design in \citet{zhang2026near} that reduces the suboptimal gap of $f_{\vx,\gamma}$ by a constant contraction factor using a subroutine \texttt{TLN} that will be specified in Algorithm \ref{alg:VR-LEN-Gap}.
Moreover, to relax the MS condition to the following gap reduction condition for the \texttt{TLN} subprocedure, we also apply a restart scheme in the middle and take several cubic regularized Newton(CRN) steps \citep{nesterov2006cubic,nesterov2018lectures} at the beginning and the end:
\begin{equation}\label{eq:upper-cubic-step}
\mathcal{C}_H(\vz)=\vz+\mathop{\rm argmin}_{\vs \in \mathbb{R}^d}
\langle\nabla f_{\vx,\gamma}(\vz),\vs\rangle
+\frac12\vs^{\top}\nabla^2f_{\vx,\gamma}(\vz)\vs
+\frac H3\|\vs\|^3,
\end{equation}
where $H:= L+2\gamma$ is the Hessian Lipschitz constant of $f_{\vx,\gamma}$ according to \citep[Lemma 4.3]{zhang2026near}.

The relaxed gap-reduction condition \citep{chen2026faster,zhang2026near} is now formally stated as follows. Although the previous algorithms using the same conditions have deterministic sub-solvers \citep{chen2026faster,zhang2026near}, we can use the same condition in our randomized sub-solver by repeatedly running a high-probability sub-solver until the condition is satisfied.

\begin{cond} \label{cond:gapreduce-zhang}
Under Assumptions \ref{asm:main}, for any $\Delta_s>0$ and $\vy_s$ satisfying
$f_{\vx,\gamma}(\vy_s)-f_{\vx,\gamma}^*\le\Delta_s$, at least one trial of \texttt{TLN} subroutine in Line \ref{line:repeat-begin} to Line \ref{line:repeat-end} in Algorithm \ref{alg:A-LEN} satisfies 
\begin{align} \label{eq:gap-reduce-34}
f_{\vx,\gamma}(\vy_{s+1})-f_{\vx,\gamma}^*
\le\Delta_{s+1} \triangleq \frac34\Delta_s.    
\end{align}
\end{cond}

We call the final algorithm \texttt{A-TLN}, and the components that are exactly consistent with existing ones \citep{zhang2026near} are presented in Algorithm \ref{alg:A-LEN}. If the above condition is satisfied, then this procedure can achieve the following complexity according to existing analysis.

\begin{lem}[{\citet[Lemma 4.1-4.4]{zhang2026near}}]\label{lem:reduce-gap} Under Assumptions~\ref{asm:main}, let
$\gamma>0$  and $\kappa:=H/\gamma$ be the second-order condition number of $f_{\vx,\gamma}$. (1) If Condition~\ref{cond:gapreduce-zhang} is satisfied for a trial, then this trial breaks at Line~\ref{line:repeat-end}. (2) If Condition~\ref{cond:gapreduce-zhang} is always satisfied, Algorithm~\ref{alg:A-LEN} with the parameters
\begin{align} \label{eq:para-setting-T-S}
T = \mathcal{O}\!\left(
\left(\frac{\gamma R^3}{\epsilon}\right)^{2/7}\right), \quad  S = \gO \left( \log \kappa  \right) 
\end{align}
returns the output $\vx_T \in \sR^d$ that satisfies $f(\vx_T) - f^* \le \eps$.
\end{lem}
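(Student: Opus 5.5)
The statement is imported from \citet[Lemma 4.1--4.4]{zhang2026near}, so the plan is to check that their analysis uses only the listed properties and then retrace it in three steps. The relevant properties are convexity of $f$, $H$-Lipschitz Hessians of $f_{\vx,\gamma}$ with $H=L+2\gamma$, and the gap-reduction guarantee of Condition~\ref{cond:gapreduce-zhang}. None of these depends on how \texttt{TLN} is implemented, so replacing a deterministic sub-solver by a randomized one only changes how many trials we need, not the deterministic conclusions.

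\textbf{Step 1: part (1).} Suppose Condition~\ref{cond:gapreduce-zhang} holds in a trial. The initial CRN steps $\vz_0=\mathcal{C}_H(\vx)$ and $\vy_0=\mathcal{C}_H(\vz_0)$ give $f_{\vx,\gamma}(\vy_0)-f^*_{\vx,\gamma}\le\Delta_0$ with $\Delta_0=\frac{2\sqrt2}{3\sqrt\gamma}\|\nabla f_{\vx,\gamma}(\vy_0)\|^{3/2}$. This follows from the uniform convexity of $f_{\vx,\gamma}$: the cubic term gives $f_{\vx,\gamma}(\vy)-f^*_{\vx,\gamma}\gtrsim\gamma\|\vy-\vy^*\|^3$, and dualizing yields a gradient-domination bound $f_{\vx,\gamma}-f^*_{\vx,\gamma}\le c\,\gamma^{-1/2}\|\nabla f_{\vx,\gamma}\|^{3/2}$.

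Inducting over the $S$ inner iterations gives $f_{\vx,\gamma}(\vy_S)-f^*_{\vx,\gamma}\le(3/4)^S\Delta_0$. A final cubic step $\hat\vy=\mathcal{C}_H(\vy_S)$ then gives $\|\nabla f_{\vx,\gamma}(\hat\vy)\|\lesssim H^{1/3}(f_{\vx,\gamma}(\vy_S)-f^*_{\vx,\gamma})^{2/3}$, which is the standard CRN bound on the gradient after one step.

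It remains to compare this with $(\gamma/32)\|\hat\vy-\vx\|^2$. I would lower bound $\|\hat\vy-\vx\|$ by $\|\vy^*-\vx\|$ minus a small error. Since $\nabla f(\vy^*)=-\gamma\|\vy^*-\vx\|(\vy^*-\vx)$, both $\|\nabla f_{\vx,\gamma}(\vy_0)\|$ and $\Delta_0$ are controlled by powers of $\gamma\|\vy^*-\vx\|^2$, up to factors polynomial in $\kappa$. Choosing $S=\gO(\log\kappa)$ absorbs these factors, so the gradient bound falls below the MS threshold and the loop breaks.

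\textbf{Step 2: part (2), outer loop.} If Condition~\ref{cond:gapreduce-zhang} always holds, every call to \textsc{Cubic-Proximal} returns $\vy_t$ satisfying the MS condition at the center $\bar\vx_t$. I would then invoke the optimal MS acceleration analysis of \citet{carmon2022optimal}, with the $\lambda'_t$ doubling and halving search. With $\lambda_t=1/(\gamma\|\vy_t-\bar\vx_t\|)$ it gives $A_T\gtrsim T^{7/2}/(\gamma R)$ and $f(\vx_T)-f^*\le R^2/(2A_T)$. This yields $f(\vx_T)-f^*\lesssim\gamma R^3/T^{7/2}$, and choosing $T=\gO((\gamma R^3/\eps)^{2/7})$ gives the claim.

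\textbf{Main obstacle.} The delicate part is the final comparison in Step 1. The $\hat\vy$ returned is not the exact proximal point, and $\|\hat\vy-\vx\|$ could in principle be tiny. My first attempt would be a case split. If $\|\vy^*-\vx\|$ is small, then $\Delta_0$ is already tiny relative to $\gamma\|\vy^*-\vx\|^3$, measured via $\|\nabla f_{\vx,\gamma}(\vy_0)\|$. Otherwise the error $\|\hat\vy-\vy^*\|\lesssim((3/4)^S\Delta_0/\gamma)^{1/3}$ is at most half of $\|\vy^*-\vx\|$ once $S\gtrsim\log\kappa$. I would follow the computations of \citet[Lemma 4.3--4.4]{zhang2026near} for these constants.
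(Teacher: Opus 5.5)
The paper gives no proof beyond citing \citet{zhang2026near}. Your architecture matches the one being imported, and part (2) is fine: it is the optimal MS analysis of \citet{carmon2022optimal} with $\sigma=1/32$ and $\lambda_t=1/(\gamma\|\vy_t-\bar\vx_t\|)$. Part (1), however, has a genuine gap at exactly the step you flag.

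\textbf{The gap.} Everything hinges on a scale-free bound $\Delta_0\le\mathrm{poly}(\kappa)\,\gamma D^3$, where $D:=\|\vy^*_{\vx,\gamma}-\vx\|$. Your justification for it does not work. The stationarity relation $\nabla f(\vy^*_{\vx,\gamma})=-\gamma D(\vy^*_{\vx,\gamma}-\vx)$ carries no information about $\|\nabla f_{\vx,\gamma}(\vy_0)\|$, because $f$ has no Lipschitz-gradient bound. For example, take $f(\vy)=\frac M2\|\vy\|^2$, so that $L=0$ and $\kappa=2$. Then $D\le\|\vx\|$, but $\|\nabla f_{\vx,\gamma}(\vx)\|/(\gamma D^2)\ge M/(\gamma\|\vx\|)\to\infty$ as $M\to\infty$. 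So the bound cannot come from the optimality condition at $\vy^*_{\vx,\gamma}$; it must come from how $\vy_0$ is generated.

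You also misplace the role of the warm start. The inequality $f_{\vx,\gamma}(\vy_0)-f^*_{\vx,\gamma}\le\Delta_0$ holds at every point by Lemma~\ref{lem:sub-prob-grad-dom}, so that is not what the two CRN steps are for. Your fallback case split does not rescue the argument either. The ratio $\Delta_0/(\gamma D^3)$ is scale-invariant, so there is no reason for it to be ``tiny'' when $D$ is small. Your second case needs precisely $\Delta_0/(\gamma D^3)\le\mathrm{poly}(\kappa)$ to get $\|\hat\vy-\vy^*_{\vx,\gamma}\|\le D/2$ with an instance-independent $S=\gO(\log\kappa)$.

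\textbf{The missing idea.} The two warm-start CRN steps are what produce the bound. Write the cubic coefficient $H/3$ as $M/6$ with $M=2H\ge H$. Nesterov's global CRN estimate, evaluated at $\vy=\vy^*_{\vx,\gamma}$, then gives
\[
f_{\vx,\gamma}(\vz_0)\le\min_{\vy}\Bigl\{f_{\vx,\gamma}(\vy)+\frac{H}{2}\|\vy-\vx\|^3\Bigr\}\le f^*_{\vx,\gamma}+\frac{H}{2}D^3 .
\]
The second step turns this gap into a gradient bound. We have $\|\nabla f_{\vx,\gamma}(\vy_0)\|\le\frac{3H}{2}\|\vy_0-\vz_0\|^2$ and $\frac H6\|\vy_0-\vz_0\|^3\le f_{\vx,\gamma}(\vz_0)-f^*_{\vx,\gamma}$. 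Hence $\|\nabla f_{\vx,\gamma}(\vy_0)\|\lesssim HD^2$, and so $\Delta_0\lesssim\kappa^{3/2}\gamma D^3$.

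With this, no case split is needed. Use monotonicity of the final CRN step together with $f_{\vx,\gamma}-f^*_{\vx,\gamma}\ge\frac\gamma6\|\cdot-\vy^*_{\vx,\gamma}\|^3$, which follows from Lemma~\ref{lem:sub-prob-grad-dom}. This gives $\|\hat\vy-\vy^*_{\vx,\gamma}\|\lesssim(3/4)^{S/3}\kappa^{1/2}D\le D/2$. The same one-step estimate applied to $\hat\vy=\mathcal{C}_H(\vy_S)$ gives $\|\nabla f_{\vx,\gamma}(\hat\vy)\|\lesssim(3/4)^{2S/3}\kappa^{4/3}\gamma D^2\le\frac{\gamma}{32}(D/2)^2\le\frac{\gamma}{32}\|\hat\vy-\vx\|^2$. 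This holds once $S=C\log\kappa$ for a large enough absolute constant $C$, recalling that $\kappa\ge2$.
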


Using Lemma \ref{lem:reduce-gap}, we only need to focus on the design of a subroutine that can solve the cubic proximal subproblem efficiently and fulfill Condition \ref{cond:gapreduce-zhang}. However, the subproblem does not satisfy strong convexity, which typically facilitates the design of variance-reduced algorithms such as SVRG~\citep{johnson2013accelerating}. To address this, we introduce a quadratic regularization term to make the problem strongly convex. Meanwhile, to ensure equivalence with the original problem, we further restrict the subproblem within a ball.

Formally, for fixed $\vx,\gamma$ and $s=0$ without loss of generality, given an initialization $\vy_0 \in \sR^d$ satisfying $ f_{\vx,\gamma}(\vy_0) - f^*_{\vx,\gamma} \le \Delta$, we let $\rho = ( 6 \Delta / \gamma)^{1/3} $, $\mu = \Delta / (16 \rho^2)$ and define the following ball-constrained regularized surrogate subproblem $g(\vy) = f_{\vx,\gamma}(\vy) +  \mu \|\vy - \vy_0 \|^2/2$ by
\begin{align} \label{eq:surrogate}
\begin{split}
&\min_{\vy \in \gB} \left\{ g(\vy) := \frac{1}{n} \sum_{i=1}^n g_i(\vy)  \right\}, \quad {\rm where} \quad \gB := \{ \vy \in \sR^d: \| \vy - \vy_0 \| \le \rho \},  \\
&{\rm and} \quad g_i(\vy) := f_i(\vy) + \frac{\gamma}{3} \| \vy - \vx \|^3 + \frac{\mu}{2} \| \vy - \vy_0\|^2. 
\end{split}
\end{align}
The setting of the ball radius $\rho$ ensures that $\gB$ contains the minimizer to $f_{\vx,\gamma}$, and the setting of regularization parameter $\mu$
ensures that $g$ is an approximation of the cubic proximal subproblem~$f_{\vx,\gamma}$ in the following sense:
\begin{lem} \label{lem:surrogate}
Under Assumption \ref{asm:main}, the surrogate function $g: \gB \rightarrow \sR$ defined in \eqref{eq:surrogate} satisfies
\[
f_{\vx,\gamma}(\vy)-f_{\vx,\gamma}^{*}\leq g(\vy)-g^*+\frac\Delta{32},
 \qquad \vy\in \gB.
\]
\end{lem}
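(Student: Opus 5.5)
Since $g = f_{\vx,\gamma} + \frac{\mu}{2}\|\cdot-\vy_0\|^2$, for every $\vy\in\gB$ we have
\[
f_{\vx,\gamma}(\vy) \le g(\vy) \quad\text{and}\quad g(\vy)\le f_{\vx,\gamma}(\vy)+\tfrac{\mu\rho^2}{2}.
\]
The plan is to use these two inequalities to compare $f^*_{\vx,\gamma}$ with $g^*:=\min_{\vy\in\gB} g(\vy)$. Writing
\[
f_{\vx,\gamma}(\vy)-f^*_{\vx,\gamma} \le g(\vy) - f^*_{\vx,\gamma} = \bigl(g(\vy)-g^*\bigr) + \bigl(g^* - f^*_{\vx,\gamma}\bigr),
\]
it suffices to show $g^* - f^*_{\vx,\gamma} \le \Delta/32$.

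\textbf{Step 1: the ball contains a minimizer.} Let $\vy^*$ be a minimizer of $f_{\vx,\gamma}$; it exists because $f$ is convex and bounded below and the cubic term is coercive. I will show $\vy^*\in\gB$, i.e.\ $\|\vy^*-\vy_0\|\le\rho$. The function $f_{\vx,\gamma}$ is a convex $f$ plus $\frac{\gamma}{3}\|\cdot-\vx\|^3$, and the cubic term is uniformly convex of degree three. The standard inequality for $\phi(\vy)=\frac13\|\vy-\vx\|^3$ is
\[
\phi(\vy)\ge \phi(\vz)+\langle\nabla\phi(\vz),\vy-\vz\rangle+\tfrac16\|\vy-\vz\|^3.
\]
Applying it at $\vz=\vy^*$, adding the convexity inequality for $f$, and using first-order optimality at $\vy^*$ gives
\[
f_{\vx,\gamma}(\vy_0)-f^*_{\vx,\gamma}\ \ge\ \frac{\gamma}{6}\|\vy_0-\vy^*\|^3.
\]
Combined with $f_{\vx,\gamma}(\vy_0)-f^*_{\vx,\gamma}\le\Delta$, this yields $\|\vy_0-\vy^*\|\le(6\Delta/\gamma)^{1/3}=\rho$, which explains the choice of $\rho$.

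\textbf{Step 2: bound $g^*$.} Since $\vy^*\in\gB$,
\[
g^*\le g(\vy^*) = f^*_{\vx,\gamma}+\tfrac{\mu}{2}\|\vy^*-\vy_0\|^2 \le f^*_{\vx,\gamma}+\tfrac{\mu\rho^2}{2}.
\]
With $\mu=\Delta/(16\rho^2)$ this gives $g^*-f^*_{\vx,\gamma}\le \Delta/32$, which completes the chain above.

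The main obstacle is Step 1, specifically confirming the constant $1/6$ in the uniform convexity of $\frac13\|\cdot\|^3$ (Nesterov's Lemma 4.2.3-type bound). If that constant turned out to be weaker, I would either adjust the argument or use the paper's factor of $6$ in $\rho$, which matches the $1/6$ exactly. Everything else is immediate.
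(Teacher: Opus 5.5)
Your proof is correct and follows the paper's argument. You show that the minimizer $\vy^*$ of $f_{\vx,\gamma}$ lies in $\gB$, then use $g^*\le g(\vy^*)\le f^*_{\vx,\gamma}+\mu\rho^2/2=f^*_{\vx,\gamma}+\Delta/32$ together with $f_{\vx,\gamma}\le g$. The only difference is how you get $\|\vy^*-\vy_0\|^3\le 6\Delta/\gamma$: you derive it directly from the uniform convexity of $\frac13\|\cdot-\vx\|^3$, and your constant $1/6$ is right, since the constant is $\frac1p 2^{2-p}$ and $p=3$ gives $1/6$. The paper instead reads the same bound off the second inequality in Lemma~\ref{lem:sub-prob-grad-dom}.
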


Consequently, to find a point $\vy$ that satisfies Condition \ref{cond:gapreduce-zhang} with $f_{\vx,\gamma}(\vy) - f_{\vx,\gamma}^* \leq \Delta/2$, it suffices to find $\vy \in \gB$ with $g(\vy)-g^*\leq 15\Delta/32$.  In the upcoming subsection, we propose a
novel  sub-solver in Algorithm \ref{alg:VR-LEN-Gap} that can achieve this goal by leveraging Lemma \ref{lem:bregman-ieq}.

\subsection{A Taylor-corrected Lazy Newton Sub-solver}

\begin{algorithm}[t]
\caption{\texttt{TLN}$
(f_{\vx,\gamma}, \vy_0, \Delta, H , \gamma, K, \delta)$}
\label{alg:VR-LEN-Gap}
\begin{algorithmic}[1]
\State Set $\rho,\gB,\mu,g_i$ as in~\eqref{eq:surrogate}.
\State  Set $\eta=(32H\rho)^{-1}$ and $J=\lceil\log_2(2n/\delta)\rceil$.
\For{$j=0,\ldots,J-1$}
\State $\vz_0=\vw_0=\vy_j$
\For{$k=0,\cdots,K-1$}
 \State $
      \vv_k=\argmin_{\vv \in \gB}\left\{
       \frac1 {2\eta} \|\vv-\vz_k\|^2
       +\langle\nabla g(\vw_k),\vv-\vw_k\rangle
       +(\vv-\vw_k)^\top\nabla^2 g(\vw_k)(\vv-\vw_k)
      \right\}$
\State Sample $i_k\sim \text{Unif}([n])$ and set $\vr_k=\nabla g_{i_k}(\vv_k)-\nabla g_{i_k}(\vw_k)
       -\nabla^2 g_{i_k}(\vw_k)(\vv_k-\vw_k)$.
\State $\vz_{k+1}=\vv_k-2\eta\vr_k$
\If{$(k+1)\bmod n=0$}
\State Sample $\vw_{k+1}$ uniformly from
            $\{\vv_{k-n+1},\ldots,\vv_k\}$
        \Else
            \State $\vw_{k+1}=\vw_k$
        \EndIf
\EndFor
\State Sample $\vy_{j+1}$ uniformly from $\{ \vv_0,\vv_1,\cdots,\vv_{K-1}\}$
\EndFor
\State \textbf{return} $\vy_J$.
\end{algorithmic}
\end{algorithm}

We now propose our Taylor-corrected Lazy Newton (TLN) sub-solver in Algorithm \ref{alg:VR-LEN-Gap} to minimize the surrogate function (\ref{eq:surrogate}). Following the standard paradigm for variance-reduced methods \citep{defazio2014saga,roux2012stochastic,johnson2013accelerating,shalev2016sdca,tsedenov2026optimal}, the algorithm maintains a snapshot $\vw_k$ that is updated every~$n$ iterations. The full gradient and Hessian oracles are only evaluated at the snapshot, and the other iterations simply take a trust-region Newton step by reusing the information of $\nabla g(\vw_k)$ and $\nabla^2 g(\vw_k)$ from the snapshot:
\begin{align} \label{eq:trust-region-lazy-Newton-step}
    \vv_k=\argmin_{\vv \in \gB}\left\{
       \frac1 {2\eta} \|\vv-\vz_k\|^2
       +\langle\nabla g(\vw_k),\vv-\vw_k\rangle
       +(\vv-\vw_k)^\top\nabla^2 g(\vw_k)(\vv-\vw_k)
      \right\},
\end{align}
where the trust region corresponds to the ball defined in \eqref{eq:surrogate}. Such an update formula is very similar to the lazy Hessian updates \citep{doikov2023second,zhang2026near}, but to further save computation, even the gradient directly reuses the one from the snapshot. However, the lazy update in \eqref{eq:trust-region-lazy-Newton-step} may not converge due to the inexactness in both the gradients and Hessians. To guide the sub-solver towards the minimizer of $g$, our TLN sub-solver takes an additional correction step $\vz_{k+1} = \vv_k - 2 \eta \vr_k$, where $\vr_k$ is a stochastic Taylor remainder estimator:
\begin{align} \label{eq:dfn-rk}
\vr_k=\nabla g_{i_k}(\vv_k)-\nabla g_{i_k}(\vw_k)
       -\nabla^2 g_{i_k}(\vw_k)(\vv_k-\vw_k).
\end{align}
Next, by using this second-order self-bounding inequality established in Lemma \ref{lem:bregman-ieq}, the correction step cancels the error from delayed gradients in \eqref{eq:trust-region-lazy-Newton-step} and eventually forms a convergent sub-solver. We emphasize that this crucial error cancellation relies the individual smoothness and convexity, and here is how our main result improves over \citet{tsedenov2026optimal}. To give a formal convergence analysis, we choose the stepsize as  
$ \eta=(32H\rho)^{-1}$ and define the Lyapunov function by:
\begin{align} \label{eq:lyp-func}
  \Phi(\vz,\vw)=\frac{1}{2}\|\vz-\tilde \vy \|^2+\eta n D_g(\tilde \vy,\vw), \quad {\rm where} \quad \tilde \vy =\arg \min_{\vy \in \gB} g(\vy).
\end{align}
We can show that $\Phi(\vz_k,\vw_k)$ decreases for each step. Then, after telescoping over the iterations in the epochs, we can obtain the following result.
\begin{lem}\label{lem:one-epoch}
Under Assumption \ref{asm:main}, for any $s \in \{ 0, 1, \cdots,  K/n -1\}$, the two consecutive snapshots of the inner loop of Algorithm \ref{alg:VR-LEN-Gap} satisfies the following descent property:
\[
\E \left[\Phi(\vz_{sn},\vw_{sn})
 -\Phi(\vz_{(s+1)n},\vw_{(s+1)n}) \right] \geq\eta\sum_{k=sn}^{(s+1)n-1} \E \bigl[ g(\vv_k)- g(\tilde \vy)+D_g(\tilde \vy,\vv_k)\bigr]. 
\]
\end{lem}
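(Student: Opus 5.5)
The plan is to prove a one-step inequality for the distance part of $\Phi$ and then telescope it over the epoch. The snapshot-sampling rule turns the accumulated $D_g(\tilde\vy,\vv_k)$ terms into the new Bregman term $\eta n D_g(\tilde\vy,\vw_{(s+1)n})$. Throughout, fix $k$ in the epoch, write $\vw=\vw_k$, and let $\E_k$ denote expectation over $i_k$ only. Note that $\vv_k$, $\vw_k$ and $\tilde\vy$ all lie in $\gB$, so $\|\vv_k-\vw_k\|\le 2\rho$.

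\textbf{Step 1 (one-step expansion).} The first-order optimality condition of the strongly convex step (\ref{eq:trust-region-lazy-Newton-step}) over $\gB$, tested at $\tilde\vy\in\gB$, gives
\[
\langle \vv_k-\vz_k+\eta(\nabla g(\vw)+2\nabla^2 g(\vw)(\vv_k-\vw)),\tilde\vy-\vv_k\rangle\ge 0 .
\]
Combining this with $\|\vv_k-\tilde\vy\|^2=\|\vz_k-\tilde\vy\|^2-\|\vz_k-\vv_k\|^2+2\langle \vv_k-\vz_k,\vv_k-\tilde\vy\rangle$ and expanding $\|\vz_{k+1}-\tilde\vy\|^2=\|\vv_k-\tilde\vy\|^2-4\eta\langle\vr_k,\vv_k-\tilde\vy\rangle+4\eta^2\|\vr_k\|^2$ yields an inequality involving $\hat\vu_k:=\nabla g(\vw)+\nabla^2 g(\vw)(\vv_k-\vw)+\vr_k$. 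Since $\E_k\hat\vu_k=\nabla g(\vv_k)$, the cross terms collect into $2\hat\vu_k-\nabla g(\vw)$, and I expect
\[
\tfrac12\E_k\|\vz_{k+1}-\tilde\vy\|^2\le \tfrac12\|\vz_k-\tilde\vy\|^2-\tfrac12\|\vz_k-\vv_k\|^2-2\eta\langle\nabla g(\vv_k),\vv_k-\tilde\vy\rangle+\eta\langle\nabla g(\vw),\vv_k-\tilde\vy\rangle+2\eta^2\E_k\|\vr_k\|^2 .
\]

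\textbf{Step 2 (Bregman bookkeeping).} I write the two gradient terms as $-\eta\langle\nabla g(\vv_k),\vv_k-\tilde\vy\rangle-\eta\langle\nabla g(\vv_k)-\nabla g(\vw),\vv_k-\tilde\vy\rangle$. For the first piece I use $\langle\nabla g(\vv_k),\vv_k-\tilde\vy\rangle=g(\vv_k)-g(\tilde\vy)+D_g(\tilde\vy,\vv_k)$. For the second I use the three-point identity
\[
\langle\nabla g(\vv_k)-\nabla g(\vw),\vv_k-\tilde\vy\rangle=D_g(\vv_k,\vw)+D_g(\tilde\vy,\vv_k)-D_g(\tilde\vy,\vw).
\]
This produces the desired $-\eta[g(\vv_k)-g(\tilde\vy)+D_g(\tilde\vy,\vv_k)]$, together with $-\eta D_g(\tilde\vy,\vv_k)+\eta D_g(\tilde\vy,\vw)$ and a leftover $-\eta D_g(\vv_k,\vw)$.

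\textbf{Step 3 (variance bound via Lemma \ref{lem:bregman-ieq}; the key step).} I must show $2\eta^2\E_k\|\vr_k\|^2\le \eta D_g(\vv_k,\vw)$, so that the leftover from Step 2 absorbs the noise; the term $-\frac12\|\vz_k-\vv_k\|^2$ is simply dropped. The quadratic part of $g_i$ has zero Taylor residual, so $\vr_k$ is the $f_{i_k}$ residual plus the residual of $h(\vy)=\frac\gamma3\|\vy-\vx\|^3$. The function $h$ is convex with $2\gamma$-Lipschitz Hessian. Applying Lemma \ref{lem:bregman-ieq} to $f_i$ and to $h$, and using $\|a+b\|^2\le 2\|a\|^2+2\|b\|^2$ together with $D_{f_i},D_h\le D_{g_i}$, gives
\[
\|\vr_k\|^2\le 8(L+2\gamma)\|\vv_k-\vw\|\,D_{g_{i_k}}(\vv_k,\vw)\le 16H\rho\,D_{g_{i_k}}(\vv_k,\vw).
\]
Averaging over $i_k$ gives $\E_k\|\vr_k\|^2\le 16H\rho\, D_g(\vv_k,\vw)$. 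With $\eta=(32H\rho)^{-1}$ this is exactly $2\eta^2\E_k\|\vr_k\|^2\le\eta D_g(\vv_k,\vw)$. This is where individual convexity enters; the constant matching is tight, so I will check the factors carefully.

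\textbf{Step 4 (telescoping).} For $k\in[sn,(s+1)n)$ the snapshot is frozen at $\vw_{sn}$. Summing the one-step inequality and taking full expectations gives
\[
\tfrac12\E\|\vz_{(s+1)n}-\tilde\vy\|^2\le\tfrac12\E\|\vz_{sn}-\tilde\vy\|^2-\eta\sum_k\E[g(\vv_k)-g(\tilde\vy)+D_g(\tilde\vy,\vv_k)]-\eta\sum_k\E D_g(\tilde\vy,\vv_k)+\eta n\,\E D_g(\tilde\vy,\vw_{sn}).
\]
Since $\vw_{(s+1)n}$ is uniform over $\{\vv_{sn},\dots,\vv_{(s+1)n-1}\}$, we have $\eta n\,\E D_g(\tilde\vy,\vw_{(s+1)n})=\eta\sum_k\E D_g(\tilde\vy,\vv_k)$. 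Moving this term to the left-hand side assembles $\Phi$ at both snapshots and gives the claimed descent.
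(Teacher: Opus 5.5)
Your proposal is correct and follows the paper's proof essentially step for step. Both use the optimality condition of the trust-region step tested at $\tilde \vy$, unbiasedness of $\vr_k$, the three-point Bregman identity, Lemma~\ref{lem:bregman-ieq} for the variance, and uniform snapshot sampling to rebuild $\Phi$. The only deviation is in Step 3: the paper applies Lemma~\ref{lem:bregman-ieq} directly to $g_i$ (convex with $H$-Lipschitz Hessian) to get $\E_k\|\vr_k\|^2\le 8H\rho\, D_g(\vv_k,\vw)$, leaving a spare $\frac{\eta}{2}D_g(\vv_k,\vw)$. Your split into $f_i$ and $h$ loses a factor of $2$ and closes with zero slack, which is still enough.
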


Next, by the strong convexity of $g$, by restarting the above updates every $K$ iterations at $\vy_{j+1}$ sampled uniformly from $\{ \vv_0,\cdots,\vv_{K-1}\}$, the potential $\Phi(\vy_j,\vy_j)$ contracts by a factor each time the algorithm restarts.

\begin{cor} \label{cor:one-epoch}
Under Assumption \ref{asm:main}, for any $j \in \{0,\cdots,J-1\}$, each time the outer of Algorithm \ref{alg:VR-LEN-Gap} restarts with a new $\vy_{j+1}$, the Lyapunov function contracts by a factor as follows:
\[
\E \Phi(\vy_{j+1}, \vy_{j+1}) \leq \frac{n+1/(\eta\mu)}K \Phi(\vy_j,\vy_j).
\]
\end{cor}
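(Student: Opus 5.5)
We derive the corollary by telescoping Lemma~\ref{lem:one-epoch} over the $K/n$ snapshot epochs of outer iteration $j$. We then lower bound the accumulated decrease using strong convexity of $g$ and the random choice of $\vy_{j+1}$. Throughout we assume $K$ is a multiple of $n$.

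\textbf{Step 1: telescoping.} Fix $j$ and recall that $\vz_0=\vw_0=\vy_j$. Summing Lemma~\ref{lem:one-epoch} over $s=0,\ldots,K/n-1$ gives
\[
\Phi(\vy_j,\vy_j)\ \ge\ \Phi(\vy_j,\vy_j)-\E\,\Phi(\vz_K,\vw_K)\ \ge\ \eta\sum_{k=0}^{K-1}\E\bigl[g(\vv_k)-g(\tilde\vy)+D_g(\tilde\vy,\vv_k)\bigr].
\]
The first inequality uses $\Phi\ge 0$, which holds because $g$ is convex and hence $D_g\ge0$.

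\textbf{Step 2: averaging.} Since $\vy_{j+1}$ is drawn uniformly from $\{\vv_0,\ldots,\vv_{K-1}\}$, we have
\[
\E\bigl[g(\vy_{j+1})-g(\tilde\vy)+D_g(\tilde\vy,\vy_{j+1})\bigr]\le \Phi(\vy_j,\vy_j)/(\eta K).
\]

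\textbf{Step 3: bounding $\Phi(\vy_{j+1},\vy_{j+1})$.} It remains to show, for every $\vy\in\gB$,
\[
\Phi(\vy,\vy)=\tfrac12\|\vy-\tilde\vy\|^2+\eta n D_g(\tilde\vy,\vy)\ \le\ \eta\bigl(n+1/(\eta\mu)\bigr)\bigl[g(\vy)-g(\tilde\vy)+D_g(\tilde\vy,\vy)\bigr].
\]
The $D_g$ part is immediate. It is multiplied by $\eta n$ on the left, and on the right its coefficient is $\eta n+1/\mu\ge\eta n$; all terms involved are nonnegative.

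For the distance term, note that $g$ is $\mu$-strongly convex because of the added $\tfrac{\mu}{2}\|\vy-\vy_0\|^2$ term, and that $\tilde\vy$ minimizes $g$ over the convex set $\gB$. The first-order optimality condition gives $\langle\nabla g(\tilde\vy),\vy-\tilde\vy\rangle\ge0$ for $\vy\in\gB$. Combined with strong convexity,
\[
g(\vy)-g(\tilde\vy)\ge\tfrac{\mu}{2}\|\vy-\tilde\vy\|^2,
\qquad\text{so}\qquad
\tfrac12\|\vy-\tilde\vy\|^2\le \tfrac1\mu\bigl(g(\vy)-g(\tilde\vy)\bigr).
\]
This is at most $\eta\cdot\frac{1}{\eta\mu}\bigl[g(\vy)-g(\tilde\vy)+D_g(\tilde\vy,\vy)\bigr]$. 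Adding the two parts proves the displayed inequality. We apply it with $\vy=\vy_{j+1}$, which lies in $\gB$ because every $\vv_k$ is produced by the ball-constrained step~\eqref{eq:trust-region-lazy-Newton-step}.

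\textbf{Step 4: combining.} Taking expectations in Step 3 and substituting Step 2 yields
\[
\E\,\Phi(\vy_{j+1},\vy_{j+1})\le\eta\,(n+1/(\eta\mu))\cdot\Phi(\vy_j,\vy_j)/(\eta K)=\frac{n+1/(\eta\mu)}{K}\,\Phi(\vy_j,\vy_j),
\]
which is the claimed contraction.

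The only delicate point is the constrained optimality at $\tilde\vy$ in Step 3. Because $\tilde\vy$ need not be a stationary point of $g$, we cannot use $\nabla g(\tilde\vy)=0$ and must rely on the variational inequality over $\gB$ instead. Everything else is bookkeeping on top of Lemma~\ref{lem:one-epoch}.
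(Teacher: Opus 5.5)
Your proof is correct and follows the paper's argument. You telescope Lemma~\ref{lem:one-epoch} over the $K/n$ epochs, average using the uniform choice of $\vy_{j+1}$, and absorb the distance term of $\Phi(\vy_{j+1},\vy_{j+1})$ via $\mu$-strong convexity, which gives the same constant $(n+1/(\eta\mu))/K$. The only minor difference is that the paper absorbs $\frac12\|\vy_{j+1}-\tilde\vy\|^2$ into $D_g(\tilde\vy,\vy_{j+1})\ge\frac{\mu}{2}\|\vy_{j+1}-\tilde\vy\|^2$, which holds by strong convexity alone, and then drops the nonnegative gap $g(\vy_{j+1})-g(\tilde\vy)$; so the variational inequality at $\tilde\vy$ that you flag as the delicate point is valid but not actually needed.
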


Then, by comparing $\Phi$ with both the left- and right-hand sides in Condition \ref{cond:gapreduce-zhang}, we can obtain the complexity of the TLN sub-solver to let Line \ref{line:repeat-end} in the Cubic-Proximal function of Algorithm~\ref{alg:A-LEN} break after appropriately choosing the parameters.

\begin{lem}
\label{lem:ensure-gap-reduce}
Let 
$\rho = ( 6 \Delta / \gamma)^{1/3} $ and $\mu = \Delta / (16 \rho^2)$ in the surrogate function of \eqref{eq:surrogate}.
For an initialization $\vy_0 \in \sR^d$ satisfying $ f_{\vx,\gamma}(\vy_0) - f^*_{\vx,\gamma} \le \Delta$, choosing 
\texttt{TLN} with parameters $K = \gO(n+\kappa)$  as the sub-solver in the Cubic-Proximal function of Algorithm~\ref{alg:A-LEN} returns an output that satisfies the condition in Line \ref{line:repeat-end}, \textit{i.e.}, $\|\nabla f_{\vx,\gamma}(\hat{\vy})\|\leq (\gamma/32)\|\hat{\vy}-\vx\|^2$ in the expected number of SSO calls bounded by
\begin{align}
      \label{eq:ISO calls for algorithm 2}\gO((n+\kappa)\log \kappa \log (n\log \kappa)),
\end{align}  
where $\kappa: = H / \gamma$ is the second-order condition number of the cubic proximal subproblem $f_{\vx,\gamma}(\vy)$.


\end{lem}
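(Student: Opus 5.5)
The plan is to establish the statement in three layers: (i) a single call of \texttt{TLN} achieves the gap reduction of Condition~\ref{cond:gapreduce-zhang} with probability at least $1-\delta$; (ii) the $S$ calls inside one trial of the \texttt{Repeat} loop therefore all succeed with constant probability, so by Lemma~\ref{lem:reduce-gap}(1) the expected number of trials is $\gO(1)$; (iii) the cost of one trial is counted in SSO calls.

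For (i), I would take $K = c(n + 1/(\eta\mu))$ with a large constant $c$. Corollary~\ref{cor:one-epoch} then gives $\E\Phi(\vy_{j+1},\vy_{j+1}) \le \tfrac12 \Phi(\vy_j,\vy_j)$, and iterating over $J$ restarts yields $\E\Phi(\vy_J,\vy_J) \le 2^{-J}\Phi(\vy_0,\vy_0)$. With the parameters of \eqref{eq:surrogate} one checks that $1/(\eta\mu) = 32H\rho\cdot 16\rho^2/\Delta = 512 H\rho^3/\Delta = 3072 H/\gamma = \gO(\kappa)$, so $K = \gO(n+\kappa)$.

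Next I would compare $\Phi$ with the objective gap. For the upper side, $\Phi(\vy_0,\vy_0) = \tfrac12\|\vy_0-\tilde\vy\|^2 + \eta n D_g(\tilde\vy,\vy_0)$. Both points lie in $\gB$, so the first term is at most $2\rho^2$. The Bregman term is bounded by $g(\tilde\vy)-g(\vy_0)+\langle\nabla g(\vy_0),\vy_0-\tilde\vy\rangle$; using Hessian Lipschitzness of $g$ on a ball of radius $\rho$, this is at most $\gO(n\eta(\|\nabla g(\vy_0)\|\rho+\ldots))$, and I would aim for a crude bound of $\mathrm{poly}(n)\cdot\rho^2$. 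For the lower side, $\mu$-strong convexity gives $\tfrac12\|\vy_J-\tilde\vy\|^2 \ge (g(\vy_J)-g^*)^2$-type control only indirectly. A cleaner route is to instead use the averaged iterate bound from Lemma~\ref{lem:one-epoch} in the last epoch: $\eta K\,\E[g(\vy_J)-g^*] \le \E\Phi(\vy_{J-1},\vy_{J-1})$. Hence $\E[g(\vy_J)-g^*] \lesssim 2^{-J}\mathrm{poly}(n)\rho^2/(\eta K) \lesssim 2^{-J}\mathrm{poly}(n)\, \mu\rho^2 = 2^{-J}\mathrm{poly}(n)\Delta/16$.

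Choosing $J = \lceil\log_2(2n/\delta)\rceil$, Markov's inequality gives $g(\vy_J)-g^* \le 15\Delta/32$ with probability $1-\delta$. Lemma~\ref{lem:surrogate} then yields $f_{\vx,\gamma}(\vy_J)-f_{\vx,\gamma}^*\le\Delta/2\le 3\Delta/4$. I expect the main obstacle to be this comparison step: making the $n$-factor in $\eta n D_g(\tilde\vy,\vy_0)$ absorbable into $J$'s $\log(2n/\delta)$. If $D_g$ is only bounded by $\gO(\Delta)$ via convexity and the ball, then $\eta n\Delta \le n\rho^2/(32H\rho)\cdot\Delta/\rho \lesssim n\rho^2$, so the loss is a single factor of $n$. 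That is exactly why $J$ carries $\log n$.

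For (ii), with $\delta = 1/(4S)$ a union bound shows that all $S$ calls in a trial succeed with probability at least $3/4$. On that event Condition~\ref{cond:gapreduce-zhang} holds along the trial, so by Lemma~\ref{lem:reduce-gap}(1) the trial terminates. The number of trials is therefore dominated by a geometric random variable with mean at most $4/3$.

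For (iii), each \texttt{TLN} call costs $J$ epochs of $K$ inner steps. Each step uses one SSO call, and one full batch of $n$ calls is made every $n$ steps, so an epoch costs $\gO(K)$. The three extra \texttt{CRN} steps $\mathcal C_H$ cost $\gO(n)$. One trial thus costs
\[
\gO\big(S\cdot J\cdot K\big) = \gO\big(\log\kappa \cdot \log(n\log\kappa)\cdot(n+\kappa)\big),
\]
using $S=\gO(\log\kappa)$ and $J=\gO(\log(nS))$. Multiplying by the $\gO(1)$ expected number of trials gives the claimed bound \eqref{eq:ISO calls for algorithm 2}.
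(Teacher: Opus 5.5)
\textbf{Gap: the bound on $\Phi(\vy_0,\vy_0)$ is never established.}
Your architecture is the paper's, and most steps match it: halving of $\Phi$ per restart with $1/(\eta\mu)=3072\kappa$, the last-round averaged bound with $1/(\eta K)\le\mu/2$, Markov plus Lemma~\ref{lem:surrogate}, the union bound with $\delta=1/(4S)$, the geometric number of trials, and the cost count. The problem is the step you flag as the main obstacle and then leave conditional. You never bound $D_g(\tilde\vy,\vy_0)$, so $\Phi(\vy_0,\vy_0)\le n\rho^2$ is not proved, and that bound is what makes $J=\lceil\log_2(2n/\delta)\rceil$ suffice.

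Your sketched route, $D_g(\tilde\vy,\vy_0)\le\langle\nabla g(\vy_0),\vy_0-\tilde\vy\rangle\le\|\nabla g(\vy_0)\|\rho$, cannot work. Assumption~\ref{asm:main} controls only the Lipschitz constant of the Hessian, not its size, so the gap does not control the gradient at $\vy_0$. Take every $f_i$ equal to $\frac M2\|\vy-\vx\|^2$ and choose $\|\vy_0-\vx\|$ so the gap is exactly $\Delta$. Then $\|\nabla g(\vy_0)\|\approx\sqrt{2M\Delta}\to\infty$ as $M\to\infty$, while $\rho=(6\Delta/\gamma)^{1/3}$ stays fixed.

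Your fallback, that $D_g(\tilde\vy,\vy_0)=\gO(\Delta)$ ``via convexity and the ball'', is also unjustified. Convexity and the constrained optimality $\langle\nabla g(\tilde\vy),\vy_0-\tilde\vy\rangle\ge0$ bound only the divergence with the arguments in the \emph{opposite} order: $D_g(\vy_0,\tilde\vy)\le g(\vy_0)-g(\tilde\vy)\le\Delta$, using $g(\vy_0)=f_{\vx,\gamma}(\vy_0)$ and $g(\tilde\vy)\ge f^*_{\vx,\gamma}$. The divergence that appears in $\Phi$ is $D_g(\tilde\vy,\vy_0)$, and it need not be $\gO(\Delta)$ uniformly in $\kappa$. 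Simple one-dimensional examples with curvature concentrated near $\vy_0$ make it grow with $\kappa$.

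\textbf{Missing idea: the Bregman swap, Lemma~\ref{lem:Dg-Lipschitz}.} Swapping the arguments of $D_g$ costs at most $\frac H6\|\tilde\vy-\vy_0\|^3\le\frac H6\rho^3=\kappa\Delta$, so $D_g(\tilde\vy,\vy_0)\le(1+\kappa)\Delta$. The factor $\kappa$ lost here is paid for exactly by the $1/H$ in the step size. For $\kappa\ge2$, $\eta n(1+\kappa)\Delta=\frac{n(1+\kappa)}{192\kappa}\rho^2\le\frac{n\rho^2}{128}$. Since $\vy_0$ is the center of $\gB$, the first term of $\Phi$ is at most $\rho^2/2$. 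Hence $\Phi(\vy_0,\vy_0)\le n\rho^2$, and $\E[g(\vy_J)-g(\tilde\vy)]\le2^{-J}\mu n\rho^2=\frac{n\Delta}{16\cdot2^J}\le\frac{\delta\Delta}{32}$.

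Your own computation $\eta n\Delta\lesssim n\rho^2$ actually carries a spare factor $1/\kappa$, which is precisely the room the swap needs. You just never derived a valid bound on the divergence to plug into it. With this step inserted, the rest of your proposal goes through as written.
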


\subsection{Main Theorem}

Now, we can prove our main theorem for the complexity of the proposed A-TLN method in Algorithm~\ref{alg:A-LEN} by combining the outer loop guarantee in Lemma \ref{lem:reduce-gap} and the inner loop complexity in Lemma \ref{lem:ensure-gap-reduce}. The optimal choice of the cubic regularization parameter is $\gamma = L/n$, which makes $\kappa = n+2$ and each cubic proximal call in Algorithm \ref{alg:A-LEN} costs $\tilde \gO(n)$ in expectation. Finally, we obtain the complexity of A-TLN as follows. 

\begin{thm}\label{thm:upper-bound}
Under Assumption~\ref{asm:main}, initialize at $\vx_0 = \vzero$, 
run Algorithm~\ref{alg:A-LEN} 
and Algorithm~\ref{alg:VR-LEN-Gap} as the subroutine with the parameters 
\begin{align} \label{eq:para-main}
 \gamma=\frac Ln,\quad T = \mathcal{O}\!\left(
\left(\frac{\gamma R^3}{\epsilon}\right)^{2/7}\right), \quad S = \gO(\log \kappa), \quad K = \gO(n + \kappa).
\end{align}
The method terminates with probability one, returns an $\epsilon$-solution $\vx_T \in \sR^d$ that satisfies $f(\vx_T) - f^* \le \eps$, and the expected number of SSO calls is bounded by
\begin{equation}
\gO\left( \left(n+n^{5/7}\left(\frac{LR^3}{\epsilon}\right)^{2/7} \right) \log^2 n\right).
 \label{eq:overall-complexity}
\end{equation}
\end{thm}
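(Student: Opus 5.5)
The proof assembles the outer guarantee of Lemma~\ref{lem:reduce-gap} with the inner cost bound of Lemma~\ref{lem:ensure-gap-reduce}, under the choice $\gamma = L/n$.

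\textbf{Step 1: the condition number.} With $\gamma = L/n$ we have $H = L + 2\gamma$, so
\[
\kappa = \frac{H}{\gamma} = \frac{L + 2L/n}{L/n} = n + 2 .
\]
Hence $K = \gO(n+\kappa) = \gO(n)$ and $S = \gO(\log \kappa) = \gO(\log n)$.

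\textbf{Step 2: correctness and termination.} By Lemma~\ref{lem:ensure-gap-reduce}, each call of Cubic-Proximal terminates after a finite expected number of trials. A finite expectation gives termination with probability one. Its returned point $\hat\vy$ satisfies the MS condition of Line~\ref{line:repeat-end}, namely $\|\nabla f_{\vx,\gamma}(\hat\vy)\| \le (\gamma/32)\|\hat\vy-\vx\|^2$. The outer Monteiro--Svaiter analysis of Lemma~\ref{lem:reduce-gap}(2) only uses this returned condition. Equivalently, the condition is guaranteed, via Lemma~\ref{lem:reduce-gap}(1), once Condition~\ref{cond:gapreduce-zhang} holds on some trial, and Lemma~\ref{lem:surrogate} together with Corollary~\ref{cor:one-epoch} gives this in each trial with probability at least $1-\delta$. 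Therefore, with $T = \gO((\gamma R^3/\eps)^{2/7})$ outer iterations and $\vx_0=\vzero$ (so $\|\vx_0-\vx^*\|\le R$), the output satisfies $f(\vx_T)-f^*\le\eps$.

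I would double-check one point here. The restart loop must not bias the outer analysis, since the outer argument is deterministic given the returned points. Because the loop stops only when the deterministic condition in Line~\ref{line:repeat-end} is verified, the outer analysis conditions only on that verified property, so there is no bias.

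\textbf{Step 3: cost accounting.} Algorithm~\ref{alg:A-LEN} makes $T+1$ Cubic-Proximal calls. By Lemma~\ref{lem:ensure-gap-reduce}, each costs in expectation
\[
\gO\bigl((n+\kappa)\log\kappa\,\log(n\log\kappa)\bigr) = \gO(n\log^2 n),
\]
using $\log(n\log n) = \gO(\log n)$. The CRN steps $\mathcal{C}_H$ and the full gradient $\nabla f(\vy_t)$ in Line~\ref{line:extra-ALEN} each cost $\gO(n)$ per outer iteration, which is dominated. Linearity of expectation over the random number of trials, combined with Wald's identity since the calls are sequential, gives a total expected cost of
\[
\gO\bigl((T+1)\, n \log^2 n\bigr).
\]
Substituting $\gamma = L/n$,
\[
nT = n\left(\frac{LR^3}{n\eps}\right)^{2/7} = n^{5/7}\left(\frac{LR^3}{\eps}\right)^{2/7},
\]
which gives exactly \eqref{eq:overall-complexity}, with the $+1$ term contributing the $n\log^2 n$ part.

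\textbf{Main obstacle.} The main difficulty is making the expected-cost argument rigorous. Each trial of the repeat loop succeeds with probability bounded below, since $\delta = 1/(4S)$ and a union bound over the $S$ calls gives success probability at least $3/4$. The number of trials is then geometric with $\gO(1)$ mean, and its expectation multiplies the per-trial cost $\gO(SJK) = \gO(n\log n\,\log(n\log n))$. This step is exactly what Lemma~\ref{lem:ensure-gap-reduce} encapsulates, so I would invoke that lemma directly rather than re-derive it.
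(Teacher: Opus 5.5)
Your proposal is correct and follows the paper's proof. Setting $\gamma = L/n$ gives $\kappa = n+2$. Lemma~\ref{lem:reduce-gap} then supplies the $T = \gO((\gamma R^3/\epsilon)^{2/7})$ outer iterations, Lemma~\ref{lem:ensure-gap-reduce} supplies the $\gO(n\log^2 n)$ expected cost per Cubic-Proximal call, and $n\cdot n^{-2/7} = n^{5/7}$ finishes the count. Your extra bookkeeping only spells out what the paper's one-line proof leaves implicit: almost-sure termination follows from the finite expected number of trials, and the $\gO(n)$ cost of the $\mathcal{C}_H$ steps and full gradients is dominated.
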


By comparing the result with the lower bound in Theorem \ref{thm:lower}, we can conclude that the complexity of our proposed A-TLN method is optimal up to only a multiplicative logarithmic factor of $\log^2 n$. We leave the development of optimal methods to future work.

\section{Conclusion}

In this paper, we resolve the optimal stochastic second-order oracle complexity for convex finite-sum optimization under individual smoothness and convexity. We prove a zero-respecting lower bound of $\Omega(n + n^{5/7} \eps^{-2/7})$ and a nearly matching upper bound of $\tilde \gO(n + n^{5/7} \eps^{-2/7})$ using a novel A-TLN method. Our upper bound strictly improves the $\tilde \gO(n+n^{6/7} \eps^{-2/7})$ upper bound achieved by the recently proposed optimal stochastic Newton method in \citep{tsedenov2026optimal} under averaged smoothness and convexity. Our results thus show a clear separation for these two function classes. Moreover, our new upper bound relies on a new second-order self-bounding inequality, which is of independent interest.

\bibliography{sample}
\bibliographystyle{plainnat}

\appendix
\section{Missing Proofs of the Lower Bound}

\subsection{Proof of Lemma \ref{lem:chain}}

\begin{proof}
For $k=0,\ldots,2K$, define the truncated chain
\[
  f^{(k)}(\vx)=\frac{\lambda}{n}\left(|a-x_{[1]}|^3+\sum_{j=2}^{k}|x_{[j-1]}-x_{[j]}|^3+|x_{[k]}|^3\right),
\]
with $f^{(0)}(\vx)=\lambda a^3/n$, and let $f_i^{(k)}$ denote the corresponding component truncations. Define the number of revealed coordinates $N_t$ recursively as $N_0 = 0$ and, for $t \ge 1$:
\[
N_t =
\begin{cases}
    N_{t-1}, & N_{t-1} = 2K; \\
    N_{t-1} + 1, & N_{t-1} \neq 2K \text{ and } (t-1) \bmod n = 0; \\
    N_{t-1} + \mathbf{1}\{I_t=c(N_{t-1}+1)\}, & N_{t-1} \neq 2K \text{ and } (t-1) \bmod n \neq 0.
\end{cases}
\]

By Definition \ref{dfn:zero-respecting-alg}, any zero-respecting algorithm satisfies $\operatorname{supp}(\vx_{t-1}) \subseteq \bigcup_{s=1}^{t-1} \operatorname{supp}(\sO_s)$. Because only the first $N_t$ coordinates have been activated and revealed by the oracle up to time $t$, all the $j$-th coordinates of $\vx_{t-1}$ are strictly zero for $j > N_{t-1}$. Thus, the second-order oracle response of $f_i$ at $\vx_{t-1}$ coincides with the one given by the truncated function $f_i^{(N_t)}$, which means that the $j$-th coordinate of $\vx_t$ is zero when $j>N_t$.


Finally, each snapshot increases $N_t$ by at most 1, while non-snapshot steps increase $N_t$ by 1 with probability at most $1/n$. Taking expectations yields $\E N_T \le 3T/n$. By Markov's inequality with $K = \lceil 8T/n \rceil$,
\[
    \Prob(N_T \ge 2K) \le \frac{\E N_T}{2K} \le \frac{3}{16}.
\]
On the complementary event ($N_T < 2K$), the last coordinate of $\vx_T$ remains zero, completing the proof.
\end{proof}



\subsection{Proof of Lemma \ref{lem:gap}}

\begin{proof}
For any vector $\vx \in \sR^{2K}$ that satisfies $\vx_{[2K]} = 0$, we have $f(\vx)=f^{(2K-1)}(\vx)$, which means
\[
f(\vx) = \frac{\lambda}{n}\left(|a-x_{[1]}|^3+\sum_{j=2}^{2K-1}|x_{[j-1]}-x_{[j]}|^3+|x_{[2K-1]}|^3\right) \ge \frac{\lambda a^3}{4 n K^2},
\]
where the last step uses Jensen's inequality and the observation that
the $2K$ terms $a-x_{[1]},x_{[1]}-x_{[2]},\ldots,x_{[2K-1]}$ sum to $a$. Finally, from the form of function $f$ in \eqref{eq:instance}, we have $f^\ast = 0$, obtained at
$x^\star=a\sum_{j=1}^{2K}\ve_j$, where $\ve_j$ denote the $j$-th standard canonical basis vector in $\sR^{2K}$.
\end{proof}

\subsection{Proof of Theorem \ref{thm:lower}}

\begin{proof}
Let $\mB \in \sR^{2K \times 2K}$ and $\mD(\vx)\in \sR^{2K \times 2K}$ be the matrix defined by 
\[
\mB = \begin{pmatrix}
-1 & 0 & 0  & \cdots & 0 & 0 \\
1 & -1 & 0  & \cdots & 0 & 0 \\
0 & 1 & -1  & \cdots & 0 & 0  \\
\vdots & \vdots & \vdots & \ddots & \vdots & \vdots \\
0 & 0 & 0 & \cdots & 1 & -1 
\end{pmatrix}, \quad
\mD(\vx) = \begin{pmatrix}
|a - x_{{[1]}}| & 0 & \cdots & 0 \\
0 & |x_{[1]} - x_{[2]}|  & \cdots & 0 \\
\vdots  & \vdots & \ddots & \vdots \\
0 & 0  & \cdots & |x_{[2K-1]} - x_{[2K]}|
\end{pmatrix}. \]
Then the Hessian of the function $f$ in \eqref{eq:instance} satisfies $\nabla^2 f(\vx) = \frac{6 \lambda}{n} \mB^\top \mD(\vx) \mB$. Moreover, denote $J_i=\{j\in[2K]:c(j)=i\}$ as the index set assigned to component $f_i$ and 
let $\mB_i$ be the submatrix consist of the rows of $\mB$ indexed by $J_i$ and $\mD_i(\vx)$ be its principal submatrix indexed by $J_i$, respectively. The Hessian of each component function $f_i$ satisfies 
$\nabla^2 f_i(\vx) = 6\lambda \mB_i^\top \mD_i(\vx) \mB_i$. Since $\norm{\mB_i}\le\norm \mB\le2$ and every row of~$\mB_i$
has norm at most $\sqrt2$, setting $\lambda=L/48$ gives
\begin{align*}
    \norm{\nabla^2 f_i(\vx)-\nabla^2 f_i(\vy)}
    &\le6\lambda\norm{\mB_i}^2
        \norm{\mD_i(\vx)-\mD_i(\vy)} \le24\sqrt2\lambda\norm{\vx-\vy}
    \le L\norm{\vx-\vy},
\end{align*}
which means that each component function $f_i$ has $L$-Lipschitz continuous Hessians.
Since $f$ achieves its minimum at $\vx^\ast=a\sum_{j=1}^{2K}\ve_j$, we have $f^\ast=0$ and
$\norm{\vx^\ast}=R$ when $a=R/\sqrt{2K}$. Therefore, the hard instance satisfies Assumption~\ref{asm:main} under our parameter setting. By Lemma~\ref{lem:chain}, we have $(\vx_{T})_{[2K]}$ holds with
probability at least $13/16$. On this event, Lemma~\ref{lem:gap} yields the claimed lower bound:
\[
    f(\vx_T)-f^\ast
    \ge\frac{\lambda a^3}{4nK^2}
    =\frac{LR^3}{48n(2K)^{7/2}}.
\]
\end{proof}

\section{Missing Proofs of the Upper Bound}

Before formally proving the lemmas in the main text, we first recall several useful lemmas. The first one  summarizes several important properties of the cubic proximal functions from \citep{chen2026faster,zhang2026near,nesterov2018lectures}.

\begin{lem}[{\citet[Lemma 4.2]{zhang2026near}}] \label{lem:sub-prob-grad-dom}
Under Assumptions \ref{asm:main}, the function $f_{\vx,\gamma}(\vy)$ defined in Eq.~(\ref{eq:upper-cubic-objective}) satisfies:
\begin{enumerate}
    \item $f_{\vx,\gamma}(\vy)$ has $(L+2 \gamma)$-Lipschitz continuous Hessians.
    \item For any $\vy \in \sR^d$ and $\vy_{\vx,\gamma}^* = \arg \min_{\vy \in \sR^d} f_{\vx,\gamma}(\vy)$, we have
    \begin{align}
        \Vert \nabla f_{\vx, \gamma}(\vy) \Vert^{3/2} \ge \frac{3}{2} \sqrt{\frac{\gamma}{2}} (f_{\vx, \gamma}(\vy) - f_{\vx,\gamma}^*)
        \geq \frac{1}{2}\left( \frac{\gamma}{2} \Vert \vy - \vy_{\vx,\gamma}^* \Vert^2 \right)^{3/2}.\label{eq:cubic-grad-uniform}
    \end{align}
\end{enumerate}
\end{lem}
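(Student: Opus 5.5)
The lemma has two independent parts, and I would prove both from explicit properties of the cubic regularizer $h(\vy) := \frac{\gamma}{3}\|\vy-\vx\|^3$, combined with Assumption~\ref{asm:main}.

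\emph{Part 1 (Hessian Lipschitzness).} Since $f = \frac1n\sum_i f_i$ and each $f_i$ has $L$-Lipschitz Hessians, $f$ does too, by the triangle inequality. It then suffices to show $\nabla^2 h$ is $2\gamma$-Lipschitz. Writing $\vu = \vy - \vx$, we have
\[
\nabla^2 h(\vy) = \gamma\Bigl(\|\vu\| I + \frac{\vu\vu^\top}{\|\vu\|}\Bigr) \quad (\vu \neq \vzero).
\]
The term $\|\vu\| I$ is $1$-Lipschitz in operator norm. For $\vu\vu^\top/\|\vu\| = \|\vu\|\,\hat\vu\hat\vu^\top$, I would use the known bound from Nesterov's textbook on Hessians of $\|\cdot\|^p$, namely $\|\nabla^2 d_3(\vu) - \nabla^2 d_3(\vu')\| \le 2\|\vu-\vu'\|$ for $d_3 = \frac13\|\cdot\|^3$. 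To check it directly, one can restrict to the two-dimensional span of $\vu,\vu'$ and compare eigenvalues; continuity covers $\vu = \vzero$. This gives the constant $L + 2\gamma$.

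\emph{Part 2 (gradient domination).} The key fact is that $d_3$ is uniformly convex of degree $3$ with constant $1/2$ (Nesterov, Lemma 4.2.3):
\[
d_3(\vy) \ge d_3(\vz) + \langle \nabla d_3(\vz), \vy-\vz\rangle + \tfrac{1}{6}\|\vy-\vz\|^3 .
\]
Adding the convexity inequality of $f$ gives, for $F := f_{\vx,\gamma}$ and all $\vy,\vz$,
\[
F(\vy) \ge F(\vz) + \langle\nabla F(\vz),\vy-\vz\rangle + \tfrac{\gamma}{6}\|\vy-\vz\|^3 .
\]
This inequality makes $F$ coercive, so the minimizer $\vy^*_{\vx,\gamma}$ exists, and it is unique by strict uniform convexity.

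For the first inequality in \eqref{eq:cubic-grad-uniform}, take $\vy = \vy^*$ and lower-bound the right-hand side by minimizing over $\vs = \vy - \vz$. With $\vg = \nabla F(\vz)$ and $\sigma = \gamma/2$, the minimum of $\langle \vg,\vs\rangle + \frac{\sigma}{3}\|\vs\|^3$ is attained at $\vs = -\sqrt{\|\vg\|/\sigma}\,\vg/\|\vg\|$ and equals $-\frac{2}{3}\|\vg\|^{3/2}/\sqrt{\sigma}$. Hence
\[
F(\vz) - F^* \le \tfrac23\sqrt{2/\gamma}\,\|\vg\|^{3/2},
\]
which rearranges to $\|\nabla F(\vz)\|^{3/2} \ge \frac32\sqrt{\gamma/2}\,(F(\vz)-F^*)$.

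For the second inequality, take $\vz = \vy^*$, where the gradient vanishes, to get $F(\vy) - F^* \ge \frac{\gamma}{6}\|\vy-\vy^*\|^3$. Multiplying by $\frac32\sqrt{\gamma/2}$ gives $\frac{\gamma}{4}\sqrt{\gamma/2}\,\|\vy-\vy^*\|^3$, which is exactly $\frac12\bigl(\frac{\gamma}{2}\|\vy-\vy^*\|^2\bigr)^{3/2}$.

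\emph{Main obstacle.} The only non-routine step is the sharp constant $2$ in the Lipschitz bound for $\nabla^2 d_3$, together with the uniform-convexity constant $1/6$. I would cite Nesterov's lemmas on $\|\cdot\|^p$ for both rather than rederive them. If a self-contained proof is needed, I would reduce to the plane spanned by $\vu,\vu'$ and check both bounds by direct eigenvalue computation.
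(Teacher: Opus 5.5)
Your proposal is correct. The paper gives no proof of its own here; it only cites Zhang et al., and your argument is the standard one behind that result. Uniform convexity of $\frac13\|\cdot\|^3$ with constant $\frac12$, together with convexity of $f$, gives the lower model $F(\vy)\ge F(\vz)+\langle\nabla F(\vz),\vy-\vz\rangle+\frac{\gamma}{6}\|\vy-\vz\|^3$. Its two specializations yield both inequalities with exactly the stated constants, and the $2\gamma$ comes from the $2$-Lipschitz Hessian of $\frac13\|\cdot\|^3$.

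One caution on Part 1: do not bound $\|\vu\|I$ and $\vu\vu^\top/\|\vu\|$ separately. The rank-one term alone has Lipschitz constant $2/\sqrt3>1$, so the split only gives $1+2/\sqrt3$. The sharp constant $2$ comes from the joint eigenvalue computation on $\mathrm{span}\{\vu,\vu'\}$ that you mention, and that computation does go through.
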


Next, we present a lemma that estimates the difference of two Bregman divergences after exchanging the order of two variables.

\begin{lem} \label{lem:Dg-Lipschitz}
If $g: \mathbb{R}^d \rightarrow \sR$ is convex and has $H$-Lipschitz continuous Hessians, then, for all
$\vv,
\vw\in\mathbb{R}^d$,
\[
 |D_g(\vv,\vw)-D_g(\vw,\vv)|\le\frac H6\| \vv - \vw \|^3.
\]
\end{lem}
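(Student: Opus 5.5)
We prove Lemma~\ref{lem:Dg-Lipschitz} by writing both Bregman divergences as integral Taylor remainders around the same segment and comparing them. Set $\vs = \vv - \vw$ and $\vx_t = \vw + t\vs$ for $t \in [0,1]$. Taylor's formula with integral remainder gives
\[
D_g(\vv,\vw) = \int_0^1 (1-t)\, \vs^\top \nabla^2 g(\vx_t)\, \vs \, dt .
\]
Expanding around $\vv$ instead, with the reversed parametrization $\vx_{1-t}$, we get
\[
D_g(\vw,\vv) = \int_0^1 (1-t)\, \vs^\top \nabla^2 g(\vx_{1-t})\, \vs\, dt = \int_0^1 t\, \vs^\top \nabla^2 g(\vx_t)\, \vs\, dt .
\]
Hence
\[
D_g(\vv,\vw) - D_g(\vw,\vv) = \int_0^1 (1-2t)\, \vs^\top \nabla^2 g(\vx_t)\, \vs\, dt .
\]

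The key observation is that $\int_0^1 (1-2t)\,dt = 0$. This lets us subtract any constant matrix from $\nabla^2 g(\vx_t)$ for free. Centering at the midpoint $\vm = \vx_{1/2}$, we obtain
\[
D_g(\vv,\vw) - D_g(\vw,\vv) = \int_0^1 (1-2t)\, \vs^\top \bigl(\nabla^2 g(\vx_t) - \nabla^2 g(\vm)\bigr) \vs\, dt .
\]
By $H$-Lipschitz continuity of the Hessian, $\|\nabla^2 g(\vx_t) - \nabla^2 g(\vm)\| \le H|t-1/2|\,\|\vs\|$. Therefore the absolute value of the difference is at most
\[
H\|\vs\|^3 \int_0^1 |1-2t|\,|t-1/2|\, dt = 2H\|\vs\|^3 \int_0^1 (t-1/2)^2\, dt = 2H\|\vs\|^3 \cdot \frac{1}{12} = \frac{H}{6}\|\vs\|^3,
\]
which is exactly the claimed constant.

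Convexity is not actually needed; only Hessian Lipschitzness is used. The only step requiring care is the choice of the centering matrix. Centering at $\vw$ instead of the midpoint would give the weaker bound
\[
H\|\vs\|^3 \int_0^1 |1-2t|\, t\, dt = \frac{H}{4}\|\vs\|^3,
\]
so the midpoint is what produces the sharp constant $1/6$. The remaining steps are routine integral evaluations.
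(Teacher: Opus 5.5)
Your proof is correct and follows essentially the same route as the paper: both write $D_g(\vv,\vw)$ and $D_g(\vw,\vv)$ as integrals of $q(t)=\vs^\top\nabla^2 g(\vw+t\vs)\vs$ against the weights $1-t$ and $t$, subtract $q(1/2)$ using $\int_0^1(1-2t)\,dt=0$, and evaluate $\int_0^1|1-2t|\,|t-1/2|\,dt=1/6$. Your side remarks are also correct: convexity is never used, and centering at $\vw$ instead of the midpoint would only give $H/4$.
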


\begin{proof}
For convenience, we set $\vs = \vv - \vw$ and define the quadratic form
$q(t)=\vs^\top\nabla^2g(\vw+t \vs)\vs$. The definition of Bregman divergence $D_g$ and Taylor's integral formula indicate that
\[
 D_g(\vv,\vw)=\int_0^1(1-t)q(t)\,{\rm d}t,\qquad
 D_g(\vw,\vv)=\int_0^1t\,q(t)\,{\rm d}t.
\]
Since $\int_0^1(1-2t)\,{\rm d} t=0$ and
$|q(t)-q(1/2)|\le H\|\vs\|^3|t-1/2|$ under the Hessian Lipschitz condition, we have
\begin{align*}
 |D_g(\vv,\vw)-D_g(\vw,\vv)|
 &=\left|\int_0^1(1-2t)\bigl(q(t)-q(1/2)\bigr)\,{\rm d}t\right|\le H\|\vs\|^3\int_0^1|1-2t|\,|t-1/2|\,{\rm d} t =\frac H6\|\vs\|^3.
\end{align*}

\end{proof}

\subsection{Proof of Lemma \ref{lem:bregman-ieq}}


\begin{proof}
Let $\vs=\vv-\vw$, and for
$\tau\in(0,1)$, we introduce the following Jensen gap as an auxiliary objective:
\[
 J_\tau(\vw):=\tau f(\vw+\vs)+(1-\tau)f(\vw)-f(\vw+\tau \vs).
\]
Convexity of the function $f$ gives $J_\tau(\vw)\ge0$ for every $\vw \in\mathbb R^d$. Moreover, its Hessian is 
\begin{align*}
 \nabla^2J_\tau(\vw)
 =\tau\bigl[\nabla^2 f(\vw+\vs)-\nabla^2 f(\vw+\tau \vs)\bigr]+(1-\tau)\bigl[\nabla^2 f(\vw)-\nabla^2 f(\vw+\tau \vs)\bigr].
\end{align*}
Then, since $f$ has $L$-Lipschitz continuous Hessians, we have
\[
 \|\nabla^2J_\tau(\vw)\|
 \le 2\tau(1-\tau)L\|\vs\|=:\ell_\tau,
\]
which means that $J_\tau$ has $\ell_\tau$-Lipschitz gradients. Applying the descent inequality and its nonnegativity yields 
\[
 0\le J_\tau\!\left(\vw- \frac{\nabla J_\tau(\vw)}{\ell_\tau}\right)
 \le J_\tau(\vw)-\frac{\|\nabla J_\tau(\vw)\|^2}{2\ell_\tau}.
\]
Dividing the above inequality by $\tau^2$ and rearranging gives
\[
 \left\|\frac{\nabla J_\tau(\vw)}{\tau}\right\|^2
 \le 2\ell_\tau  \frac{J_\tau(\vw)}{\tau^2} 
 = 4(1-\tau)L\|\vs\|\frac{J_\tau(\vw)}{\tau}.
\]
Passing the limit to $t\downarrow0$, we know that $\nabla J_\tau (\vw) / \tau$ and $J_{\tau}(\vw)/ \tau$ in the above converges as follows
\begin{align*}
 \lim_{\tau\downarrow0}\frac{\nabla J_\tau(\vw)}{\tau}
 &=\nabla f(\vv)-\nabla f(\vw)-\nabla^2 f(\vw) \vs,\\
 \lim_{\tau\downarrow0}\frac{J_\tau(\vw)}{\tau}
 &=f(\vv)-f(\vw)-\langle\nabla f(\vw),\vs\rangle=D_f(\vv,\vw),
\end{align*}
which then proves the desired inequality.
\end{proof}

\subsection{Proof of Lemma \ref{lem:surrogate}}

\begin{proof}
By the definition of the surrogate function $g_i$ in \eqref{eq:surrogate}, it is easy to see that each $g_i$ is $\mu$-strongly convex and has $H$-Lipschitz continuous Hessians for $H = L + 2\gamma$. Lemma \ref{lem:sub-prob-grad-dom} gives $\|y^*_{\vx,\gamma}-\vy_0\|^3\leq 6 \Delta/\gamma$ for $\vy_{\vx,\gamma}^* = \arg \min_{\vy \in \sR^d} f_{\vx,\gamma}(\vy)$, hence, $\vy^*_{\vx,\gamma}\in \gB := \{ \vy \in \sR^d: \| \vy - \vy_0 \| \le (6 \Delta/\gamma)^{1/3}\}$. Consequently, since $g(\vy) = f_{\vx,\gamma}(\vy) + \mu \| \vy - \vy_0  \|^2/2$, we have
\[
g^* \leq g(\vy_{\vx,\gamma}^{*})=f^*_{\vx,\gamma}+\frac{\mu}{2}\|\vy^*_{\vx,\gamma}-\vy_0 \|^2.
\]
Then, we can further obtain that
\[
f_{\vx,\gamma}(\vy) - f_{\vx,\gamma}^* \le g(\vy) - g^* + \frac{\Delta}{32}, \quad \forall \vy \in \gB.
\]
\end{proof}

\subsection{Proof of Lemma \ref{lem:one-epoch}}
\begin{proof}
Fix an epoch $s$ and let $k\in\{sn,\ldots,(s+1)n-1\}$ be an arbitrary iteration in the epoch. Note that the snapshot $\vw_s$ remains fixed throughout the whole epoch. The optimality condition of 
$\vv_k$ in the Newton step and $\tilde \vy \in \gB$ give
\[
 \langle\vz_k-\vv_k,\vv_k-\tilde \vy \rangle
 \geq\eta\langle\nabla g(\vw_s)
        +2\nabla^2 g(\vw_s)(\vv_k-\vw_s),\vv_k-\tilde \vy \rangle.
\]
Since $i_k$ is sampled uniformly, we have
 \[
 \E_k\vr_k=\nabla g(\vv_k)-\nabla g(\vw_s)
            -\nabla^2 g(\vw_s)(\vv_k-\vw_s).
\]
Now, expanding $\|\vz_{k+1}-\tilde \vy\|^2$ and using these two relations yields
\[
 \begin{aligned}
 &\frac12\|\vz_k-\tilde \vy \|^2
 -\frac12\E_k\|\vz_{k+1}-\tilde \vy \|^2
 \\
 =& \langle \vz_k - \vv_k, \vv_k  -\tilde \vy \rangle + \frac{1}{2} \| \vz_k  - \vv_k \|^2 + 2 \eta \langle \E_k r_k, \vv_k - \tilde \vy \rangle - 2 \eta^2 \E_k \| \vr_k\|^2 \\
 \geq &\eta\langle2\nabla g(\vv_k)-\nabla g(\vw_s),\vv_k-\tilde \vy \rangle +\frac12\|\vz_k-\vv_k\|^2-2\eta^2\E_k\|\vr_k\|^2.
 \end{aligned}
\]
By Lemma \ref{lem:sub-prob-grad-dom}, each component function $g_i$ has $H := (L+2\gamma)$-Lipschitz continuous Hessians.
Averaging Lemma~\ref{lem:bregman-ieq} over all the $n$ component functions $g_i$ 
and using $\|\vv_k-\vw_s\|\leq2\rho$ gives
$\E_k\|\vr_k\|^2\leq8H\rho D_g(\vv_k,\vw_s)$. The definition of the Bregman divergence gives the following identity:
\[
 \begin{aligned}
 \langle2\nabla g(\vv_k)-\nabla g(\vw_s),\vv_k-\tilde \vy \rangle
 &=g(\vv_k)-g(\tilde \vy)+2D_g(\tilde \vy,\vv_k) - D_g(\tilde \vy,\vw_s)+D_g(\vv_k,\vw_s).
 \end{aligned}
\]
Then, the choice of stepsize of $\eta=(32H\rho)^{-1}$ therefore imply
\[
 \begin{aligned}
 \frac12\|\vz_k-\tilde \vy \|^2
 -\frac12\E_k\|\vz_{k+1}-\tilde \vy \|^2
 &\geq\eta\bigl[g(\vv_k)-g(\tilde \vy)+2D_g(\tilde \vy,\vv_k)
                  -D_g(\tilde \vy,\vw_s)\bigr]\\
 &\quad+\frac12\|\vz_k-\vv_k\|^2
            +\frac\eta2D_g(\vv_k,\vw_s).
 \end{aligned}
\]
Taking full expectation over the whole epoch, telescoping over $k = sn,\cdots, (s+1)n-1$, and discarding the last two nonnegative terms in the above gives
\[
 \begin{aligned}
 \frac12\|\vz_{sn}-\tilde \vy \|^2
 -\frac12\E \|\vz_{(s+1)n}-\tilde \vy \|^2
 &\geq\eta\sum_{k=sn}^{(s+1)n-1}
    \E \bigl[g(\vv_k)-g(\tilde \vy)+2D_g(\tilde \vy,\vv_k)\bigr]- \eta nD_g(\tilde \vy,\vw_s).
 \end{aligned}
\]
At the end of the epoch,  $\vw_{(s+1)n}$ is sampled uniformly from
 $\{\vv_{sn},\ldots,\vv_{(s+1)n-1}\}$, indicating that \( 
 \E D_g(\tilde \vy, \vw_{(s+1)n}) = 1/n \cdot \sum_{k=sn}^{(s+1)n-1} \E D_g(\tilde \vy,\vv_k) \). Substituting this identity and the definition of $\Phi$ proves
 \begin{align} \label{eq:Phi-descent-one-epoch}
\E \left[\Phi(\vz_{sn},\vw_{sn})
 -\Phi(\vz_{(s+1)n},\vw_{(s+1)n}) \right] \geq\eta\sum_{k=sn}^{(s+1)n-1} \E \bigl[ g(\vv_k)- g(\tilde \vy)+D_g(\tilde \vy,\vv_k)\bigr],
\end{align}
 which is exactly as desired by the lemma.
 


\end{proof}

\subsection{Proof of Corollary \ref{cor:one-epoch}}

\begin{proof}
For the $j$th round, taking expectations and summing over \eqref{eq:Phi-descent-one-epoch} over $k=0,\ldots,N/n-1$ gives
\[
 \eta\sum_{k=0}^{K-1}\E\bigl[g(\vv_k)-g(\tilde \vy)+D_g(\tilde \vy,\vv_k)\bigr] \leq\Phi(\vz_0,\vw_0) - \Phi(\vz_K, \vw_K) \le \Phi(\vz_0,\vw_0),
\]
where the last step uses the non-negativity of the Lyapunov function.
Since $\vy_{j+1}$ is sampled uniformly from $\{ \vv_0,\vv_1,\cdots,\vv_{K-1}\}$ and $\vz_0 = \vw_0 = \vy_j$ in the $j$th round, the above inequality implies that
\begin{align} \label{eq:one-round-bregman}
     \E\bigl[g(\vy_{j+1})-g(\tilde \vy)+D_g(\tilde \vy,\vy_{j+1})\bigr] \le \frac{\Phi(\vy_j,\vy_j)}{\eta K}.
\end{align}
Next, we use the strong convexity of $g$ to obtain  $ \mu\| \vy_{j+1} - \tilde \vy \|^2 / 2 \le D_g(\tilde \vy, \vy_{j+1}) $ and substitute this bound into $\Phi(\vy_{j+1},\vy_{j+1})$ to get
\begin{align} \label{eq:Phi-descent-one-restart}
\E \Phi(\vy_{j+1}, \vy_{j+1}) \le \left( \frac{1}{\mu} + n \eta \right) \E D_g(\tilde \vy, \vy_{j+1})  \le \left( \frac{1}{\mu} + n \eta \right) \frac{\Phi(\vy_j,\vy_j)}{\eta K}.    
\end{align}
\end{proof}

\subsection{Proof of Lemma \ref{lem:ensure-gap-reduce}}

\begin{proof}
In light of Lemma \ref{lem:sub-prob-grad-dom}, we define $\kappa:= H / \gamma =  2 + L/ \gamma$ be the second-order condition number of both the functions $f_{\vx,\gamma}$ and $g$.
The
constrained optimality of $\tilde \vy = \arg \min_{\vy \in \gB} g(\vy)$ yields
$\langle\nabla g(\tilde \vy),\vy_0-\tilde \vy \rangle\ge0$. Since $g(\vy_0)=f_{\vx,\gamma}(\vy_0)$ and
$g(\tilde \vy)\ge f_{\vx,\gamma}^\star$, we have
\[
 D_g(\vy_0,\tilde \vy)\le g(\vy_0)-g(\tilde \vy)\le\Delta.
\]
Together with Lemma \ref{lem:Dg-Lipschitz} and $\tilde \vy \in \gB := \{ \vy \in \sR^d: \| \vy - \vy_0 \| \le \rho \}$,
this implies
\begin{equation}
 D_g(\tilde \vy,\vy_0)\le\Delta+\frac H6\rho^3=(1+\kappa)\Delta,
 \label{eq:vrlen-initial-divergence}
\end{equation}
where the last step is due to $\rho^3  = 6 \Delta / \gamma$.
Recalling the definition of $\Phi$ in \eqref{eq:lyp-func}, we know that
\begin{align} \label{eq:vrlen-initial-potential}
 \Phi(\vy_0,\vy_0) = \frac{1}{2} \| \vy_0 - \tilde \vy\|^2 + \eta n D_g(\tilde \vy, \vy_0) \le 
\frac12\rho^2+\eta n(1+\kappa)\Delta\nonumber =\left(\frac12+\frac{n(1+\kappa)}{192\kappa}\right)\rho^2
 \le n\rho^2,
\end{align}
where the second last step uses $ \eta = (32 H \rho)^{-1} $ and $\Delta = \rho^3 \gamma/ 6 $ and last inequality is due to $n\ge1$ and $\kappa\ge2$. Since $1/(\eta\mu)=3072\kappa$, choosing
$K = \gO(n+\kappa)$ can ensure $K \geq 2(n+1/(\eta\mu))$ and Lemma \ref{lem:one-epoch} therefore reduces the expected value of
$\Phi(\vy_j,\vy_j)$ by a factor of two in each restart. After all $J-1$ times of restarts,
\[
\E \Phi(\vy_{J-1}, \vy_{J-1}) \le 2^{-(J-1)} \Phi(\vy_0,\vy_0). 
\]
Applying \eqref{eq:Phi-descent-one-restart} to the last round with $j=J-1$ and  using $1/(\eta K)\leq\mu/2$, we have
\[
\E [g(\vy_J) - g(\tilde \vy) ] \le \frac{2^{-(J-1)}}{\eta K} \Phi(\vy_0,\vy_0) \le 2^{-J} \mu \Phi(\vy_0,\vy_0)  \le 2^{-J} \mu n \rho^2 = \frac{n\Delta}{16 \cdot 2^J} \le \frac{\delta \Delta}{32},
\]
where the second last step uses the parameter setting of $\mu = \Delta / (16 \rho^2)$ and the last inequality holds when $J = \gO(\log (n / \delta))$.
By Lemma \ref{lem:surrogate} and Markov's inequality,
\[
 \Prob\bigl(f_{\vx,\gamma}(\vy_J)-f_{\vx,\gamma}^{*}>\Delta/2\bigr)
 \leq\Prob\bigl(g(\vy_J)-g^* >15\Delta/32\bigr)
 \leq\frac\delta{15}.
\]
Therefore,  the output of the \texttt{TLN} subroutine satisfies Condition \ref{cond:gapreduce-zhang} with probability at least $1- \delta/15$, which is sufficient for Line \ref{line:repeat-end} to break by item (1) in Lemma \ref{lem:reduce-gap}.
Using a union bound for all the $S$ epochs in the restart scheme, each trial from Line \ref{line:repeat-begin} to Line \ref{line:repeat-end} breaks with probability at least $1 - S \delta \ge 3/4$. Then, the probability of requiring more than $k$
repetitions until successfully returns an output satisfying the condition in \ref{line:repeat-end} is at most $4^{-k}$. By the expectation of Geometric distribution, the expected total number of trials is at most $4/3$. Combining the parameter setting of $ S = \log (\kappa) $, $\delta  =  1/ (4 S)$, $J = \gO(\log (n / \delta))$, and $K = \gO( n+ \kappa)$ means that the total SSO calls to return  $\|\nabla f_{\vx,\gamma}(\hat{\vy})\|\leq (\gamma/32)\|\hat{\vy}-\vx\|^2$ in expectation is bounded by
\[
\frac{4}{3} S J ( n+K) = \gO( (n + \kappa) \log \kappa \log (n \log \kappa) ).
\]

\end{proof}


\subsection{Proof of Theorem \ref{thm:upper-bound}}

\begin{proof}
Setting the cubic regularization parameter $\gamma = L/n$, then the parameters in \eqref{eq:para-main} satisfies $T = \gO(1 + n^{-2/7} \epsilon^{-2/7})$, $S = \log n$, $J = \log (n \log n)$, and $K = \gO(n)$. Therefore, by Lemma \ref{lem:reduce-gap} and \ref{lem:ensure-gap-reduce}, the total SSO complexity of Algorithm \ref{alg:A-LEN} is $ \gO(T S J (K+n)) =  \gO( (n + n^{5/7} \epsilon^{-2/7}) \log^2 n )$ in expectation.
\end{proof}

\end{document}